\newtheorem{theorem}{Theorem}
\newtheorem{lemma}{Lemma}
\newtheorem{corollary}{Corollary}
\newtheorem{proposition}{Proposition}
\theoremstyle{definition}
\newtheorem{definition}{Definition}
\newtheorem{example}{Example}
\newtheorem{problem}{Problem}
\theoremstyle{remark}
\numberwithin{equation}{section}
\providecommand{\keywords}[1]{{\noindent\textit{Keywords:}} #1}
\newcommand{\cfbox}[2]{%
    \colorlet{currentcolor}{.}%
    {\color{#1}%
    \fbox{\color{currentcolor}#2}}%
}
\begin{document}

\title{On super-strong Wilf equivalence classes of permutations}         % Enter your title between curly braces

\author[1]{Demetris Hadjiloucas \thanks{d.hadjiloucas@euc.ac.cy}}
\author[1]{Ioannis Michos\thanks{i.michos@euc.ac.cy}}
\author[2]{Christina Savvidou \thanks{csavvid@ucy.ac.cy}} 
\affil[1]{Department of Computer Science and Engineering, School of Sciences, European University Cyprus, Cyprus}
\affil[2]{School of Computing and Mathematics, UCLan Cyprus, Cyprus}
       % Enter your name between curly braces
% \date{November 6, 2016}          % Enter your date or \today between curly braces
\date{}

\maketitle

\begin{abstract}
Super-strong Wilf equivalence is a type of Wilf equivalence on words that was originally introduced as strong Wilf equivalence by Kitaev et al. [Electron. J. Combin. 16(2)] in $2009$. We provide a necessary and sufficient condition for two permutations in $n$ letters to be super-strongly Wilf equivalent, using distances between letters within a permutation. Furthermore, we give a characterization of such equivalence classes via two-colored binary trees. This allows us to prove, in the case of super-strong Wilf equivalence, the conjecture stated in the same article by Kitaev et al. that the cardinality of each Wilf equivalence class is a power of $2$.
\end{abstract}

\keywords{Patterns in permutations, cluster method, generalized factor order, Wilf equivalence, super-strong Wilf equivalence.}

\maketitle

\section{Introduction}

In this work we investigate the notion of \emph{super-strong Wilf equivalence} as given by J.~Pantone and V.~Vatter in \cite{Pantone} on permutations in $n$ letters. To avoid any confusion we note that this notion was originally referred to as \emph{strong Wilf equivalence} by S.~Kitaev et al.~in \cite{Kitaev}. Let $\mathbb{P}^*$ be the set of words on the alphabet $\mathbb{P}$ of positive integers. Following \cite{Pantone}, two words $u$ and $v$ are super-strongly Wilf equivalent (resp. strongly Wilf equivalent), denoted $u\sim_{ss} v$ (resp. $u\sim_{s} v$), if there exists a weight-preserving bijection $f: \mathbb{P}^* \rightarrow \mathbb{P}^*$ such that for all words $w$, the embedding sets of $u$ in $w$ and of $v$ in $f(w)$ are equal (resp. equipotent) (see Section \ref{Sec:Notation} for all relevant definitions). To our knowledge, only a limited number of results exist on super-strong Wilf equivalence. For example, even for $n=3$, it has been an open problem whether $213\sim_{ss} 312$ \cite[\S 8.4. Problem (6)]{Kitaev}. We show that the answer to this is affirmative (see Proposition \ref{prop:n->n-1}) and, moreover, we give a full characterization of super-strong Wilf equivalence classes. 

Our motivation arose mainly from another open problem on Wilf equivalence of permutations \cite[\S 8.4. Problem (5)]{Kitaev}, namely whether the number of elements of the symmetric group on $n$ letters that are Wilf equivalent to a given permutation is always a power of $2$. We are able to answer this positively in the case of super-strong Wilf equivalence.

A powerful tool for dealing with patterns in permutations is the \emph{cluster method} of Goulden and Jackson \cite{GouldenJackson, Kitaev, Elizalde}. Pantone and Vatter \cite{Pantone} used this method in the special case of embeddings in words. Our initial observation is based upon \cite[Theorem 1.1]{Pantone}, which states that two strongly Wilf equivalent words are rearrangements of one another. In Section \ref{Sec:MCRT} we extend this to a necessary and sufficient rearrangement criterion on minimal clusters (Minimal Cluster Rearrangement Theorem, MCRT for short) for super-strong Wilf equivalence (Theorem \ref{MCRT}), namely $u\sim_{ss} v$ if and only if every minimal cluster of $u$ is a rearrangement of the corresponding minimal cluster of $v$. An arithmetic interpretation of MCRT led us to an intersection rule result (Proposition \ref{intersectionrule}) by enumerating the number of times each letter is blocked by letters that are greater in an arbitrary pre-cluster.

The intersection rule implies preservation of distances under super-strong Wilf equivalence. This led us to define the notion of \emph{cross equivalence} (see Section \ref{Sec:SeqDiff}). We say that $u$ and $v$ are cross equivalent if for each letter $i$, $i^+(u) = i^+(v)$, where $i^+(u)$ denotes the multiset of distances of $i$ from letters greater than $i$ in $u$. 

In Section \ref{Sec:SeqDiff} we use the Inclusion-Exclusion Principle to simplify the intersection rule condition. This leads us to the notion of \emph{consecutive differences}. Given a permutation $u$ and a letter $i$, the vector of consecutive differences $\Delta_i(u^{-1})$ for $i\in [2,n-1]$, contains the distances between letters in $u$ that are greater than or equal to $i$ as they appear sequentially in $u$ from left to right. Our main result (Theorem \ref{seqofdiff}) is a concrete characterization of super-strong Wilf equivalence. In particular, $u\sim_{ss} v$ if and only if $u$ and $v$ have the same sequence of differences.

In Section \ref{Sec:BinaryTree} we define a binary tree $T^n(u)$ that helps us visualize the cross equivalence class of a given permutation $u$ as the set of leaves of $T^n(u)$. The crucial point here is that the cardinality of the latter is always a power of $2$. In order to partition this set into super-strong Wilf equivalence classes, we define a labeling on the vertices of $T^n(u)$ that have two children, distinguishing between ``good'' ones which preserve symmetry (labeled $0$), and ``bad'' ones which destroy symmetry (labeled $1$). This labeling, which is in accordance to the sequence of differences $\Delta_i(u^{-1})$ for $i\in [2,n-1]$, implies that the cardinality of each super-strong Wilf equivalence class is a power of $2$.

\section{Preliminaries}\label{Sec:Notation}

Let $\mathbb{P}$ be the set of positive integers with the usual total order $\leq$. For each positive integer $n$ we let $[n] = \{ 1, 2, \ldots , n \}$ and for two non-negative integers $m, n$, where $m < n$, we let
$[m, n] = \{ m, m+1, \ldots n \}$. Let $n\in \mathbb{P}$ and $S\subseteq \mathbb{P}$. We denote by $n+S$ the \emph{shift} of $S$ to the right by $n$ units, namely the set $n + S = \{ n+s: s\in S\}$. 

Let $\mathbb{P}^{*}$ be the {\em free monoid} on $\mathbb{P}$ with the operation of {\em concatenation} of words. The set $\mathbb{P}^{*}$ can also be viewed as the set of {\em strict integer compositions}. The set of words with letters from $[n]$, where each letter appears exactly once, is the set of permutations in $n$ letters, denoted by $\mathcal{S}_n$. Let $\epsilon$ be the empty word or composition. For every $w = w_{1}w_{2} \ldots w_{n} \in \mathbb{P}^*$, the \emph{reversal} $\tilde{w}$ of $w$ is defined as $\tilde{w} = w_n w_{n-1} \ldots w_2 w_1$. A word $w$ that is equal to its reversal is called a \emph{palindrome}. We let $|w|$ be the {\em length} $n$ of the word $w$ (i.e. the number of parts of the composition $w$) and $||w||$ be the {\em height} or {\em norm} of $w$ defined as $||w|| = w_{1} + w_{2} + \cdots + w_{n}$ (i.e. the total length of the composition $w$). We denote by $|w|_i$ the number of occurrences of the letter $i$ in $w$ and by $alph(w)$ the set of distinct letters of $\mathbb{P}$ that occur in $w$. Let us also define the multiset of distances between two distinct letters $i, j$ in $w \in \mathbb{P}^*$ as \[ d_w(i,j) = \{ |k - l| : w_k = i, w_l = j\}.\]
In the trivial case where $w$ is a permutation, $d_w(i,j)$ is a singleton, whose element is identified with the usual distance between the corresponding letters of the permutation. For example, for $w = 2132213$ we have $|w| = 7$, $||w|| = 14$, $|w|_2 = 3$, $alph(w) = \{1,2,3\}$, and $d_w(2,3) = \{2, 6, 1, 3, 2, 2\}$. \\

%\noindent {\bf Factor order} The factor order on $\mathbb{P}^{*}$ is the partial order obtained by letting $u {\leq}_{fo} w$ if and only if $u$ is a factor
%of $w$. \\

\noindent {\bf Generalized factor order} Given $w, u \in \mathbb{P}^{*}$, we say that $u$ is a {\em factor} of $w$ if there exist words $s, v \in \mathbb{P}^{*}$ such that $w = suv$. For example $u = 322$ is a factor of $w = 2132213$, since $w = 21u13$. Consider the poset $(\mathbb{P}, {\leq})$ with the usual order in $\mathbb{P}$. The {\em generalized factor order} on $\mathbb{P}^{*}$ is the partial order - also denoted by $\leq$ - obtained by letting $u \leq w$ if and only if there is a factor $v$ of $w$ such that $|u| = |v|$ and $u_{i} \leq v_{i}$,
for each $i \in [|u|]$.
The factor $v$ is called an {\em embedding} of $u$ in $w$. If the first element of $v$ is the $j$-th element of $w$ then the index $j$ is called
an {\em embedding index} of $u$ into $w$. The {\em embedding index set} of $u$ into $w$, or \emph{embedding set} for brevity, is defined as the set of all embedding indices of $u$ into $w$
and is denoted by $Em(u, w)$.

For example, if $u = 322$ and $w = 2343213421$, then $u \leq w$ with embedding factors $v = 343$, $v' = 432$ and $v'' = 342$ and corresponding embedding
index set $Em(u, w) = \{ 2, 3, 7 \}$. 

Let now $t, x$ be two commuting indeterminates. The {\em weight} of a word $w \in \mathbb{P}^{*}$ is defined as the monomial $wt(w) = t^{|w|}x^{||w||}$.
For example, for $w = 2132213$ we obtain $wt(w) = t^{7}x^{14}$. 

A bijection $f:\mathbb{P}^* \rightarrow \mathbb{P}^*$ is called {\em weight-preserving} if the weight of $w$ is preserved under $f$, i.e. $|f(w)| = |w|$ and $||f(w)|| = ||w||$, for every $w \in \mathbb{P}^{*}$. Observe that $f(\epsilon) = \epsilon$, for all weight-preserving bijections $f$. 

Let $u \in \mathbb{P}^{*}$. The {\em weight generating function} $F(u; t, x)$ of $u$ is defined in \cite{Kitaev} as
\[ F(u; t, x) = \sum_{w \geq u} wt(w) = \sum_{w \geq u} t^{|w|} x^{\|w\|}. \]
The generating function $A(u; t, x, y)$ of $u$ is defined in \cite{Pantone} as
\[ A(u; t, x, y) = \sum_{w\in \mathbb{P}^*} t^{|w|} x^{\|w\|} y^{|Em(u,w)|}. \]
There are three notions of Wilf equivalence that are relevant to this article. They are defined as follows. \\

\noindent {\bf Wilf equivalence} Two words $u, v \in \mathbb{P}^{*}$ are called {\em Wilf equivalent}, denoted $u \sim v$, if \[ F(u; t, x) = F(v; t, x). \] 

\noindent {\bf Strong Wilf equivalence} Two words $u, v \in \mathbb{P}^{*}$ are called {\em strongly Wilf equivalent}, denoted $u {\sim}_{s} v$, if \[ A(u; t, x, y) = A(v; t, x, y).\] Equivalently, $u\sim_s v$ if there exists a weight-preserving bijection $f : \mathbb{P}^{*} \rightarrow \mathbb{P}^{*}$ such that $|Em(u, w)| = |Em(v, f(w))|$ for all $w \in \mathbb{P}^{*}$.
\\

\noindent {\bf Super-strong Wilf equivalence} Two words $u, v \in \mathbb{P}^{*}$ are called {\em super-strongly Wilf equivalent}, denoted $u {\sim}_{ss} v$, if there exists a weight-preserving bijection $f : \mathbb{P}^{*} \rightarrow \mathbb{P}^{*}$ such that $Em(u, w) = Em(v, f(w))$ for all $w \in \mathbb{P}^{*}$.

We note that super-strong Wilf equivalence implies strong Wilf equivalence, which in turn implies Wilf equivalence. We denote by $[u]$ and $[u]_{ss}$ the Wilf and super-strong Wilf equivalence class respectively, of a given word $u$. \\

For a word $u=u_1 u_2 \ldots u_n$, define $u^+ = (u_1+1) (u_2+1) \ldots (u_n+1)$. The following result on Wilf and super-strong Wilf equivalences will be useful in the sequel.

\begin{lemma}\cite[Lemmas 4.1, 5.1]{Kitaev} \label{lem:Kitaev}
\begin{enumerate}
\item For every $u\in \mathbb{P}^*$, $u\sim \tilde{u}$.
\item If $u\sim v$, then (i) $1u\sim 1v$ and (ii) $u^+ \sim v^+$.
\item If $u\sim_{ss} v$, then (i) $1u\sim_{ss} 1v$, (ii) $1u \sim_{ss} v1$ and (iii) $u^+ \sim_{ss} v^+$.
\end{enumerate}
\end{lemma}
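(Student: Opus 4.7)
The plan is to treat the three parts of the lemma in turn, using a weight-preserving bijection of $\mathbb{P}^*$ (or a generating-function identity derived from one) in each case.

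\textbf{Part 1.} I would show that the reversal map $w\mapsto\tilde{w}$ restricts to a weight-preserving bijection between $\{w : w\geq u\}$ and $\{w : w\geq \tilde{u}\}$: if $v$ is an embedding factor of $u$ in $w$ with $v_i\geq u_i$ componentwise, then $\tilde{v}$ is an embedding factor of $\tilde{u}$ in $\tilde{w}$ with $\tilde{v}_i\geq \tilde{u}_i$, and conversely. Summing weights over both sides yields $F(u;t,x)=F(\tilde{u};t,x)$.

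\textbf{Part 2.} For (i), conditioning on the first letter of a word $w\geq 1u$ (the condition $w_1\geq 1$ is automatic and the suffix $w_2\cdots w_{|w|}$ must be $\geq u$) gives the closed form $F(1u;t,x)=\frac{tx}{1-x}\,F(u;t,x)$, so $u\sim v$ forces $1u\sim 1v$ at once. For (ii), I would decompose every $w\in\mathbb{P}^*$ at its occurrences of the letter $1$, writing $w=w^{(0)}\,1\,w^{(1)}\,1\cdots 1\,w^{(k)}$ with each $w^{(i)}$ having all letters $\geq 2$. Since $u^+$ contains no letter $1$, every embedding of $u^+$ in $w$ lies entirely inside some $w^{(i)}$ and is in bijection with an embedding of $u$ in the decrement $w^{(i)}-1$. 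This expresses $F(u^+;t,x)$ as a rational function of $F(u;tx,x)$ and a universal generating function (the one for words with all letters $\geq 2$); the conclusion follows from $F(u;t,x)=F(v;t,x)$.

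\textbf{Part 3.} Let $f$ witness $u\sim_{ss}v$, so $Em(u,w)=Em(v,f(w))$ for all $w$. For (i), the map $f_1(aw'):=a\,f(w')$, where $a\in\mathbb{P}$ is the first letter of the nonempty word $w$, is a weight-preserving bijection, and a direct index-shift computation gives $Em(1u,w)=Em(u,w')=Em(v,f(w'))=Em(1v,f_1(w))$. For (iii), I would reuse the $1$-decomposition from Part 2(ii): define $g(w)$ by replacing each block $w^{(i)}$ with $f(w^{(i)}-1)+1$ while leaving the separating $1$s in place. The block offsets inside $w$ and $g(w)$ coincide because $f$ preserves length, and the embedding positions of $u^+$ and $v^+$ match block by block since $Em(u,w^{(i)}-1)=Em(v,f(w^{(i)}-1))$.

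\textbf{The main obstacle is (ii).} Rather than build a direct bijection, I would factor it through the cyclic shift $c(w):=w_2 w_3\cdots w_{|w|}w_1$, which is itself a weight-preserving bijection on $\mathbb{P}^*$. A straightforward check shows $j\in Em(1u,w)$ iff $w_{j+1}\cdots w_{j+|u|}\geq u$ iff $j\in Em(u1,c(w))$, so $c$ already witnesses $1u\sim_{ss}u1$ for every word $u$. Combining $1u\sim_{ss}1v$ from (i) with $1v\sim_{ss}v1$ from the cyclic shift (applied with $u$ replaced by $v$) then yields $1u\sim_{ss}v1$ by transitivity of $\sim_{ss}$.
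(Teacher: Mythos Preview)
The paper does not supply its own proof of this lemma: it is quoted verbatim from \cite[Lemmas~4.1, 5.1]{Kitaev} and used as a black box. So there is no in-paper argument to compare against, and the relevant question is simply whether your sketch is sound.

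It is. Your Parts~1, 2(i), 3(i), and 3(iii) are the standard arguments and are correct as written; in particular the identity $Em(1u,aw')=Em(u,w')$ follows exactly from the fact that the constraint coming from the prepended~$1$ is vacuous, and your block map $g$ in 3(iii) is a genuine weight-preserving bijection because $f$ preserves length (hence block offsets) and norm. For 2(ii), the decomposition at the letter~$1$ does yield $F(u^+;t,x)$ as a rational expression in $F(u;tx,x)$ and the universal series for $\{w:\text{all letters}\geq 2\}$, which is all you need.

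Your handling of 3(ii) via the cyclic shift is the cleanest point of the write-up. The verification that $Em(1u,w)=Em(u1,c(w))$ goes through because both embedding sets are contained in $[1,|w|-|u|]$, and on that range the rotated word $c(w)$ agrees with $w$ after an index shift by~$1$ at every position that matters; the wraparound letter $w_1$ only lands under the trailing~$1$ of $u1$, where the constraint is automatic. Composing with 3(i) and transitivity is then immediate. This factorisation $1u\sim_{ss}u1$ (universal) plus $1u\sim_{ss}1v$ (from the hypothesis) is exactly how one expects the original proof in \cite{Kitaev} to run.
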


A well-known negative criterion for super-strong Wilf equivalence is related to the notion of \emph{minimal cluster} for a certain embedding index set, starting from position $1$. Such an embedding index set $E$ is completely characterized by the shift vector $(e_1, e_2, \ldots, e_r)$, which is defined by the equality
\begin{equation}\label{eq:EmbeddingSet}
E = \{ j_{0}, j_{1}, j_{2}, \ldots , j_{r} \} = \{ 1, 1+e_1, 1+e_1+e_2, \ldots, 1+e_1+e_2+\cdots + e_r\},
\end{equation} 
where $j_k = 1+e_1 + \cdots + e_k$, for $k\in [0,r]$.

Let $u$ be a word of length $n$ and $E$ be an embedding set, with the additional property that $1\leq e_i\leq n-1$. An $(r+1)$-\emph{pre-cluster} of $u$ with embedding set $E$, denoted $P(u,E)$, is an $(r+1) \times (e_1+e_2+\cdots+e_r+n)$ array where in the $i$-th row, there is a copy of the word $u$ shifted $e_1+\cdots+e_{i-1}$ places to the right and all remaining places - which are not included in a representation of a pre-cluster - are filled with $1$. An $(r+1)$-\emph{minimal cluster} $m(u,E)$ of $u$ with embedding set $E$ is the word of length $e_1+e_2+\cdots+e_r+n$ whose $j$-th letter is the maximum value in the $j$-th column of $P(u,E)$. 

Suppose that $u\in \mathcal{S}_n$ and $E$ is a given embedding set. Kitaev et al. used minimal clusters in \cite[p. 14]{Kitaev} to construct a word $w$ with $Em(u,w)=E$, such that $w$ has both minimum length and height. This can be done since the embedding set $Em(u, m(u,E))$ is uniquely defined by the positions of $n$ in $m(u,E)$. In the general case where $u$ is an arbitrary word $u\in \mathbb{P}^*$, $Em(u,m(u,E)) = E$ might not hold. For example, if $u=1\, 1\, 1$ and $E=\{1,3\}$ then it follows that $m(u,E) = 1\, 1\, 1\, 1\, 1$ and $Em(1\, 1\, 1, 1\, 1\, 1\, 1\, 1) = \{1,2,3\} \neq \{1,3\}$.

%\begin{lemma}
%Let $u \in P^{*}$ and $E = \{ j_{1}, j_{2}, \ldots , j_{r} \}$ with $j_{1} = 1$. Then there exists a unique word $w_{min}$ of minimum length and height such that $Em(u, w_{min}) = E$, i.e. if $Em(u, w) = E$, for some $w \in P^{*}$ then $|w_{min}| \leq |w|$ and
%$||w_{min}|| \leq ||w||$.
%\end{lemma}

%\begin{proof}
%We construct a table with a copy of $u$ starting in the first, $j_{2}$-th, $\ldots , j_{r}$-th positions in rows $1, 2, \ldots , r$, respectively. Then we take the maximum value in each column for the corresponding entries of $w$. For $u = 2143$ and $E = \{1, 3, 4\}$ we obtain
%\[ \begin{array}{ccccccccc}
%        &  & 2 & 1 & 4 & 3 &  &  &  \\
%        &  &  &  & 2 & 1 & 4 & 3 &  \\
%        &  &  &  &  & 2 & 1 & 4 & 3 \\
%       - & - & - & - & - & - & - & - & - \\
%       w & = & 2 & 1 & 4 & 3 & 4 & 4 & 3 \\
%     \end{array}
%    \]
%This construction guarantees that $w$ has the desired embedding indices and no others. Furthermore, it is not hard to see that the outcome of %this construction is the unique $w_{min}$ of minimum length equal to $j_{r} + |u| - 1$ and also of minimum height.
%\end{proof}

The above method yields a sufficiency criterion for non super-strong Wilf equivalence.

\begin{proposition}\label{Pr:NegSS}\cite[p.14]{Kitaev}
Let $u, v \in \mathcal{S}_n$ and let $E$ be an embedding index set. If $\|m(u,E)\| \neq \|m(v,E)\|$ then $u\nsim_{ss} v$.
\end{proposition}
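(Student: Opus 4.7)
The plan is to prove the contrapositive: if $u \sim_{ss} v$ via a weight-preserving bijection $f$, then $\|m(u,E)\| = \|m(v,E)\|$ for every valid embedding index set $E$. A preparatory observation is that for $u \in \mathcal{S}_n$ the equality $Em(u, m(u,E)) = E$ holds exactly (not merely as an inclusion). Indeed, the letter $n$ occurs exactly once in $u$, at some position $p$, so by the minimal-cluster construction $n$ appears in $m(u,E)$ at precisely the $r+1$ positions $j_k + p - 1$ for $k \in [0,r]$ (distinct because the $j_k$ are). Any embedding of $u$ into $m(u,E)$ must align its unique $n$ with an occurrence of $n$ in $m(u,E)$, forcing the embedding index to lie in $E$, while conversely each $j_k$ is an embedding by construction. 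This is precisely where the hypothesis $u \in \mathcal{S}_n$ intervenes: the pathology illustrated just before the proposition by $u = 1\,1\,1$, $E = \{1,3\}$, cannot arise. The same holds for $v$.

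Setting $w = m(u,E)$ and $w' = f(w)$, the super-strong Wilf property then gives $Em(v, w') = Em(u, w) = E$, while weight-preservation gives $|w'| = |w| = e_1 + \cdots + e_r + n$ and $\|w'\| = \|m(u,E)\|$. The key claim is that any word $w'$ of this length satisfying $E \subseteq Em(v, w')$ pointwise dominates $m(v,E)$, and hence $\|w'\| \geq \|m(v,E)\|$. This follows directly from the definition of embedding: for each $i \in E$, the requirement $v_j \leq w'_{i+j-1}$ for $j \in [n]$ says that $w'$ dominates, coordinatewise, the $i$-th row of the pre-cluster $P(v,E)$; taking the columnwise maximum over all such rows, $w'$ dominates $m(v,E)$. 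At positions untouched by any shifted copy of $v$, the corresponding entry of $m(v,E)$ is the filler $1$, so the inequality is automatic there. Thus $\|m(u,E)\| \geq \|m(v,E)\|$.

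By symmetry, applying the same argument with $f^{-1}$ (which also witnesses $v \sim_{ss} u$) yields the reverse inequality, and equality follows. The only conceptual obstacle is confirming that the minimal cluster is genuinely the coordinatewise-minimal word whose embedding set contains $E$; this is essentially immediate from the definition, but it is what makes the argument work, together with the exact identification $Em(u, m(u,E)) = E$ that the permutation assumption secures.
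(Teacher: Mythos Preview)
Your proof is correct and follows essentially the same approach as the paper's: both use that $Em(u, m(u,E)) = E$ holds exactly for permutations, then invoke the height-minimality of $m(v,E)$ among words whose embedding set contains $E$ to get $\|f(m(u,E))\| \geq \|m(v,E)\|$. The only cosmetic difference is that the paper assumes without loss of generality that $\|m(v,E)\| > \|m(u,E)\|$ and derives a contradiction with weight-preservation, whereas you prove the contrapositive and obtain both inequalities via the symmetry $f \leftrightarrow f^{-1}$; your version is also more explicit about the coordinatewise-domination argument underlying the minimality claim.
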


\begin{proof}
Without loss of generality, we may assume that $||m(v,E)|| > ||m(u,E)||$. It is enough to show that for any weight preserving bijection $f : \mathbb{P}^{*} \rightarrow \mathbb{P}^{*}$ we get
$Em(u, m(u,E)) \neq Em(v, f(m(u,E)))$. Suppose the contrary. From the previous observation, $E=Em(u, m(u,E))$. Since $Em(v, f(m(u,E))) = E$, the minimality condition for $m(v,E)$ yields
$||f(m(u,E))|| \geq ||m(v,E)||$. By our hypothesis, we get $||f(m(u,E))|| > ||m(u,E)||$. But the latter contradicts our assumption that $f$ is height preserving.
\end{proof}

% As an application of this result we will show that $2143 {\nsim}_{ss} 3412$. (Note that $3412 = \widetilde{2143}$, later on we will prove that if a word $w \in S_{n}$ does not begin or end with $1$ then $w {\nsim}_{ss} \widetilde{w}$.) We showed that for $E = \{1, 3, 4 \}$, we get $w_{min} = 2143443$. If we do exactly the same for $w' = \widetilde{w} = 3412$, we obtain ${w'}_{min} = 3434422$ \cite[p. 15]{Kitaev}. Clearly $||w_{min}|| = 21 < 22 = ||{w'}_{min}||$, hence by Proposition 1 it follows that $2143 {\nsim}_{ss} 3412$.  \\

\section{Minimal Cluster Rearrangement Theorem}\label{Sec:MCRT}

The main result of this section is based upon results and tools from \cite{Pantone}. Borrowing notation from there, the \emph{minimal cluster generating function} of $u$ is defined to be
\[ M(u;t,x,z) = \sum_{r\geq 1} z^r \! \! \! \! \! \! \! \! \sum_{\text{minimal} \atop \text{$r$-clusters $m$ of $u$}} \! \! \! \! \! \! \! \! t^{|m|} x^{||m||}. \]

Two words $w$ and $w'$ in $\mathbb{P}^*$ are said to be \emph{rearrangements} of one another if $alph(w) = alph(w')$ and $|w|_i = |w'|_i$, for each $i\in alph(w)$. The main result in \cite{Pantone} is the following.

\begin{theorem}\cite[Theorem 1.1]{Pantone}
If two words in $\mathbb{P}^*$ are strongly Wilf equivalent then they are rearrangements of one another.
\end{theorem}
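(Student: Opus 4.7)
My approach is the Goulden--Jackson cluster method as adapted to words by Pantone--Vatter \cite{Pantone}. The cluster-method inversion expresses $A(u;t,x,y)$ as a rational function of $t$, $x$, and $M(u;t,x,y-1)$, so the hypothesis $A(u)=A(v)$ is equivalent to equality of the minimal cluster generating functions $M(u;t,x,z)=M(v;t,x,z)$ (with $z=y-1$). I would then progressively read off information from this equality.

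The $z^1$ coefficient is simply $t^{|u|}x^{\|u\|}$, which forces $|u|=|v|=:n$ and $\|u\|=\|v\|$. Turning to $z^2$: each shift $s\in[1,n-1]$ contributes a single minimal $2$-cluster $m(u,\{1,1+s\})$ of length $n+s$, and since length determines the shift, matching the resulting polynomials gives $\|m(u,\{1,1+s\})\|=\|m(v,\{1,1+s\})\|$ for every $s$. A direct computation using $\max(a,b)=a+b-\min(a,b)$ yields
\[
\|m(u,\{1,1+s\})\| \;=\; 2\|u\|-\sum_{i=s+1}^{n}\min(u_i,u_{i-s}),
\]
and rewriting $\min(a,b)=\sum_{k\geq 1}[a\geq k][b\geq k]$ in terms of the level sets $U_k=\{i\in[n]:u_i\geq k\}$ (and $V_k$ analogously) converts this into the autocorrelation identity
\[
\sum_{k\geq 1}|U_k\cap(U_k+s)| \;=\; \sum_{k\geq 1}|V_k\cap(V_k+s)| \qquad (s\in[1,n-1]).
\]

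To recover the multiset, I would turn to higher $r$-clusters. Expanding $\|m(u,E)\|$ via the identity $\max(a_1,\ldots,a_r)=\sum_{\emptyset\neq T\subseteq[r]}(-1)^{|T|+1}\min_{i\in T}a_i$ writes each cluster height as a signed sum of $r$-fold intersection cardinalities of translates of the $U_k$'s. In particular, comparing the cluster with shift vector $(1,1,\ldots,1)$ at all lengths $\geq n$ (where the cluster height stabilises to $c_u+(\text{length})\cdot\max u$) already forces $\max u=\max v$. Aggregating all shift vectors of a fixed total shift should then extract power-sum identities $\sum_k|U_k|^r=\sum_k|V_k|^r$ for every $r\geq 1$; since a finite multiset of non-negative integers is determined by its power sums, this forces $\{|U_k|\}_k=\{|V_k|\}_k$, and hence $|u|_c=|U_c|-|U_{c+1}|=|v|_c$ for every letter $c$, proving that $u$ and $v$ are rearrangements.

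The main obstacle lies precisely in this last step: for $r\geq 3$, the $z^r$ coefficient of $M$ groups minimal clusters only by their total length, not by individual shift vector, so the individual cluster heights can be recovered only as multisets for each fixed length. Performing the inclusion--exclusion carefully enough to isolate clean power-sum terms from these aggregated multisets is the principal combinatorial hurdle. Notably, this difficulty disappears under super-strong Wilf equivalence, where cluster heights (and, as the Minimal Cluster Rearrangement Theorem developed in the present paper will show, the minimal clusters themselves) match one-by-one.
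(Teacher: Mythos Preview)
This theorem is not proved in the present paper at all: it is quoted from \cite{Pantone}, and the only information given here about its proof is the single sentence that the argument proceeds via the minimal cluster generating function, establishing that $u\sim_s v$ is equivalent to $M(u;t,x,z)=M(v;t,x,z)$. So there is no in-paper argument to compare against beyond that remark, and your opening reduction to $M(u)=M(v)$ is exactly in line with it.

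That said, your proposal is a plan, not a proof, and the gap you yourself flag is real. The extraction of $|u|=|v|$ and $\|u\|=\|v\|$ from the $z^1$ coefficient is correct, and at $z^2$ the lengths $n+s$ separate the shifts, so your autocorrelation identity $\sum_{k}|U_k\cap(U_k+s)|=\sum_{k}|V_k\cap(V_k+s)|$ for each $s$ is genuinely available; summing this over $s$ does recover $\sum_k|U_k|^2$ (since $\sum_{s\in\mathbb{Z}}|U_k\cap(U_k+s)|=|U_k|^2$ and $\sum_k|U_k|=\|u\|$ is already known). But for $r\geq 3$ the coefficient of $z^r t^{n+m}$ in $M$ lumps together all shift vectors $(e_1,\ldots,e_r)$ with $\sum e_j=m$, and you only recover the \emph{multiset} of heights for each $m$. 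Your proposed fix, ``aggregating all shift vectors of a fixed total shift should then extract power-sum identities $\sum_k|U_k|^r=\sum_k|V_k|^r$'', is a hope rather than an argument: the inclusion--exclusion expansion of a single $r$-cluster height mixes intersections of translates of the $U_k$ across \emph{different} levels $k$ and across all subsets of rows, and it is far from clear that any linear combination of the aggregated sums collapses to a clean power sum once $r\geq 3$. You have correctly identified where the difficulty is, but you have not resolved it; for the completed argument you will need to consult \cite{Pantone} directly.
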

Minimal cluster generating functions are used to prove this; namely, it is shown that if $M(w;t,x,z) = M(w';t,x,z)$, then $w\sim_s w'$.

\vspace{0.2cm}

Suppose we have a word $u$. Let $v$ be the minimal cluster $v=m(u,E_1)$ of $u$, for some embedding set $E_1$. Consider also the minimal cluster $w=m(v,E_2)$ of $v$ with embedding set $E_2$. The \emph{Minimal Cluster Rearrangement Theorem} is based on a simple observation, namely that $w$ is also a minimal cluster of $u$. In particular, $w = m(u,E_3)$ with $E_3 = \{i+j-1: i\in E_1, j\in E_2\}$.

\begin{example}
Let $u = 2 3 1 4$ and $E_1 = \{1, 2, 4\}$. Constructing $P(u,E_1)$ and letting $v=m(u,E_1)$, we have:
\[ \begin{array}{ccccccccc}
         &   & 2 & 3 & 1 & 4 &   &   &   \\
         &   &   & 2 & 3 & 1 & 4 &   &   \\
         &   &   &   &   & 2 & 3 & 1 & 4 \\
\hline       v & = & 2 & 3 & 3 & 4 & 4 & 1 & 4. \\
     \end{array}
    \]
%Thus, $v$ is the word $m(u,E_1) = 2 3 3 4 4 1 4$. 
Suppose now that we take the minimal cluster $w=m(v,E_2)$ with $E_2 = \{1,3\}$, namely
\[ \begin{array}{ccccccccccc}
         &   & 2 & 3 & 3 & 4 & 4 & 1 & 4 &   &   \\
         &   &   &   & 2 & 3 & 3 & 4 & 4 & 1 & 4 \\
\hline       w & = & 2 & 3 & 3 & 4 & 4 & 4 & 4 & 1 & 4. \\
     \end{array}
    \]
%The corresponding minimal cluster $w$ is $2 3 3 4 4 4 4 1 4$. 
According to the previous observation, $w$ is also a minimal cluster over $u$, with embedding set $E_3 = \{1+1-1, 1+3-1, 2+1-1, 2+3-1, 4+1-1, 4+3-1\} = \{1, 2, 3, 4, 6 \}$. To visualize this, substitute the words $v=m(u,E_1)$ in $P(v,E_2)$, with pre-cluster $P(u,E_1)$. This gives the following pre-cluster over $u$:
\[ \begin{array}{ccccccccccc}
         &   & 2 & 3 & 1 & 4 &   &   &   &   &   \\
         &   &   & 2 & 3 & 1 & 4 &   &   &   &   \\
         &   &   &   &   & 2 & 3 & 1 & 4 &   &   \\
         &   & - & - & - & - & - & - & - &   &   \\
         &   &   &   & 2 & 3 & 1 & 4 &   &   &   \\
         &   &   &   &   & 2 & 3 & 1 & 4 &   &   \\
         &   &   &   &   &   &   & 2 & 3 & 1 & 4 \\
         &   &   &   & - & - & - & - & - & - & - \\
\hline       w & = & 2 & 3 & 3 & 4 & 4 & 4 & 4 & 1 & 4. \\
     \end{array}
    \]
In this table, copies of the word $u$ start in positions $1\ (=1+1-1), 2\ (=1+2-1), 3\ (=3+1-1), 4\ (=1+4-1 \textrm{ or } 3+2-1)$ and $6\ (=3+4-1)$, as mentioned above.
\end{example}

The minimal cluster was defined over embedding sets $E$ (see (\ref{eq:EmbeddingSet})) under the restriction $e_i \in [n-1]$, to ensure that words in the pre-cluster always overlap. We need to extend this definition, so that the overlapping restriction is waived.

\begin{definition}
Let $m \in \mathbb{N}$, $u \in \mathcal{S}_n$ and $E = \{j_0, j_1, \ldots, j_r\}$ with $j_0=1$. The \emph{extended minimal cluster} of $u$ on $E$ with prescribed length $m$ is the unique word $w_{\min}$ of minimum height such that $Em(u, w_{\min}) = E$.
\end{definition}

It is obvious that an extended minimal cluster exists if and only if $m\geq j_r+|u| -1$. To obtain the extended minimal cluster of a word $u$, we construct once again its corresponding pre-cluster, filling any empty places with $1$.

\begin{example}
Suppose $m=11$, $u=2314$ and $E=\{1, 6, 7\}$. The corresponding extended minimal cluster is constructed as follows:

\[ \begin{array}{ccccccccccccc}
         &   & 2 & 3 & 1 & 4 &   &   &   &   &   &   & \\
         &   &   &   &   &   &   & 2 & 3 & 1 & 4 &   & \\
         &   &   &   &   &   &   &   & 2 & 3 & 1 & 4 & \\
\hline w & = & 2 & 3 & 1 & 4 & 1 & 2 & 3 & 3 & 4 & 4 & 1. \\
     \end{array}
    \]

\end{example}

\begin{theorem} \label{MCRT} {\bf (MCRT)}
Let $u_1,u_2 \in \mathbb{P}^*$. Then $u_1\sim_{ss} u_2$ if and only if the minimal clusters $m(u_1,E)$ and $m(u_2,E)$ are rearrangements of one another, for every embedding index set $E$.
\end{theorem}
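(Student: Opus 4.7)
My plan is to prove the two directions of MCRT separately, using Pantone's theorem (stated earlier in the excerpt) for $(\Rightarrow)$ and an inclusion-exclusion counting argument for $(\Leftarrow)$.

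For the forward direction, suppose $u_1 \sim_{ss} u_2$ via a weight-preserving bijection $f$. The key observation is that the embeddings of the minimal cluster $m(u_i, E)$ into any word $w$ are completely determined by the embeddings of $u_i$ into $w$. Unwinding the definition of the pre-cluster, $m(u_i, E)$ embeds at position $s$ in $w$ if and only if $u_i$ embeds at every position of $(s-1) + E$ in $w$; equivalently,
\[ Em(m(u_i, E), w) = \{s : (s-1) + E \subseteq Em(u_i, w)\}. \]
Since $Em(u_1, w) = Em(u_2, f(w))$ by super-strong Wilf equivalence, applying the above identity to both sides gives $Em(m(u_1, E), w) = Em(m(u_2, E), f(w))$, and in particular $|Em(m(u_1, E), w)| = |Em(m(u_2, E), f(w))|$. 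This is precisely the condition $m(u_1, E) \sim_s m(u_2, E)$, and Pantone's theorem \cite[Theorem 1.1]{Pantone} then yields that $m(u_1, E)$ and $m(u_2, E)$ are rearrangements of one another.

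For the backward direction, suppose $m(u_1, E)$ and $m(u_2, E)$ are rearrangements for every embedding set $E$ starting at $1$; in particular, $\|m(u_1, E)\| = \|m(u_2, E)\|$ and $|m(u_1, E)| = |m(u_2, E)|$ for every such $E$. To construct $f: \mathbb{P}^* \to \mathbb{P}^*$ with $Em(u_1, w) = Em(u_2, f(w))$ for all $w$, it is enough (via disjoint gluing over the values of the embedding set) to match cardinalities
\[ |\{w : |w| = m,\ Em(u_1, w) = E,\ \|w\| = h\}| = |\{w : |w| = m,\ Em(u_2, w) = E,\ \|w\| = h\}| \]
for every triple $(E, m, h)$. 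By Möbius inversion in the Boolean lattice,
\[ |\{w : Em(u_i, w) = E\}| = \sum_{E' \supseteq E} (-1)^{|E' \setminus E|}\, |\{w : Em(u_i, w) \supseteq E'\}|, \]
and the condition $Em(u_i, w) \supseteq E'$ is equivalent to the pointwise inequality $w \geq \mu_i^{E', m}$, where $\mu_i^{E', m}$ is the extended minimal cluster of $u_i$ on $E'$ padded with $1$'s to length $m$. By stars-and-bars, the number of such $w$ with height $h$ is $\binom{h - \|\mu_i^{E', m}\| + m - 1}{m - 1}$, and $\|\mu_i^{E', m}\|$ depends only on $\|m(u_i, E'_0)\|$ and $|m(u_i, E'_0)|$, where $E'_0$ is $E'$ translated to start at $1$; both quantities coincide for $u_1$ and $u_2$ by the rearrangement hypothesis, so the Möbius-inverted cardinalities agree term by term.

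The main obstacle is the backward direction, specifically verifying that the summands in the inclusion-exclusion really are equal for $u_1$ and $u_2$. Care is needed in handling embedding sets $E$ that do not start at $1$ (reduced to the case starting at $1$ by translation), in treating $E = \emptyset$ (obtained by subtracting all non-empty $Em$-contributions from the total count of words of fixed length and height), and in confirming that the equivalence $Em(u_i, w) \supseteq E' \Leftrightarrow w \geq \mu_i^{E', m}$ pointwise remains valid even when $u_i$ is not a permutation.
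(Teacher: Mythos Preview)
Your proof is correct and its backward direction is identical to the paper's: inclusion--exclusion over supersets of the embedding set, reducing to counting words dominating the extended minimal cluster, whose height and length depend only on data that the rearrangement hypothesis equates.

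Your forward direction reaches the same destination ($m(u_1,E) \sim_s m(u_2,E)$, then apply Pantone--Vatter) but by a slightly more direct path. The paper uses the observation that a minimal cluster of a minimal cluster is again a minimal cluster, with embedding set $E'' = \{i+j-1 : i \in E,\ j \in E'\}$, and combines this with Proposition~\ref{Pr:NegSS} to conclude that the minimal cluster generating functions $M(m(u_1,E);t,x,z)$ and $M(m(u_2,E);t,x,z)$ coincide, hence $m(u_1,E) \sim_s m(u_2,E)$. Your embedding identity $Em(m(u_i,E),w) = \{s : (s-1)+E \subseteq Em(u_i,w)\}$ is the same structural fact recast, but you exploit it more directly: since the bijection $f$ witnessing $u_1 \sim_{ss} u_2$ preserves $Em(u_i,\cdot)$ as \emph{sets}, it automatically preserves $Em(m(u_i,E),\cdot)$ as sets, so the very same $f$ witnesses $m(u_1,E) \sim_{ss} m(u_2,E)$ (a fortiori $\sim_s$). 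This bypasses both the generating function $M$ and the appeal to Proposition~\ref{Pr:NegSS}, which as stated is restricted to permutations while MCRT is asserted for all of $\mathbb{P}^*$.
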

\begin{proof}
Suppose that $u_1\sim_{ss} u_2$. Fix an embedding set $E$ that satisfies the overlapping condition. It is obvious that $|m(u_1, E)| = |m(u_2, E)|$ and by Proposition \ref{Pr:NegSS}, we get that $||m(u_1, E)|| = ||m(u_2, E)||$. Suppose $v_1 = m(u_1,E)$ and $v_2 = m(u_2, E)$.  We will show that $v_1 \sim_{s} v_2$. Following \cite[p.~4]{Pantone}, it suffices to show that $M(v_1;t,x,z) = M(v_2;t,x,z)$. From the previous discussion, and for any overlapping embedding set $E'$, we know that $m(v_1, E')$ and $m(v_2, E')$ are minimal clusters over $u_1$ and $u_2$ respectively, with embedding set $E'' = \{i+j-1: i\in E, j\in E'\}$. Since $u_1 \sim_{ss} u_2$, it follows that $m(v_1, E')$ and $m(v_2, E')$ have the same weight, hence $M(v_1;t,x,z) = M(v_2;t,x,z)$.

For the converse implication, suppose that for all embedding sets $E$, $m(u_1, E)$ and $m(u_2,E)$ are rearrangements of one another. We will construct a weight-preserving bijection $f$ from $\mathbb{P}^*$ to $\mathbb{P}^*$ such that ${Em}(u_1,w) = {Em}(u_2,f(w))$. First, we partition the set of words according to their length and height. Let $\mathbb{P}^*_{m,n} = \{w\in\mathbb{P}^*: |w|=n, ||w||=m\}$. Clearly, $\mathbb{P}^* = \{\epsilon\} \bigsqcup_{m,n\geq 1} \mathbb{P}^*_{m,n}$. To have a weight-preserving bijection $f$ from $\mathbb{P}^*$ to $\mathbb{P}^*$ such that ${Em}(u_1,w) = {Em}(u_2,f(w))$ it is necessary and sufficient to find a collection of bijections $f_{m,n}:\mathbb{P}^*_{m,n} \rightarrow \mathbb{P}^*_{m,n}$ such that ${Em}(u_1,w) = {Em}(u_2,f_{m,n}(w))$. Fix $m$ and $n$ in $\mathbb{N}$. Since $|\mathbb{P}^*_{m,n}| < \infty$, we know that we can find a bijection between two sets if and only if the two sets have the same cardinality. Note that since we have words of length $n$, we cannot place $u_1$ in position $n-u_1+2$ or anywhere beyond that. We show that for every possible embedding set $S\subseteq [n-|u_1|+1]$, the number of words $w$ in $\mathbb{P}^*_{m,n}$ such that ${Em}(u_1,w)=S$ is equal to the number of words $z$ in $\mathbb{P}^*_{m,n}$ such that ${Em}(u_2,z)=S$. This will imply the result. \\
Fix $S\subseteq [n-|u_1|+1]$. For every subset $T\subseteq [n-|u_1|+1]$, define $u_{1,T}$ to be the extended minimal cluster of $u_1$ with embedding set $T$. Define also $W_1(m, n, T) = \{w\in \mathbb{P}^*_{m,n}: {Em}(u_1,w) = T\}$ and $U_1(m, n, T) = \{w\in \mathbb{P}^*_{m,n}: T\subseteq {Em}(u_1,w)\}$. Clearly, $W_1\subseteq U_1$. The cardinality of $U_1(m,n,T)$ has an immediate combinatorial interpretation, via ordered weak partitions. To establish that the embedding indices of $u_1$ in a word $w$ contain those of $T$, it suffices to make sure that every letter of $w$ is greater than or equal to every letter of $u_{1,T}$ in the corresponding places. Therefore, such a $w$ can by constructed by partitioning the difference between heights $\|w\|=m$ and $\|u_{1,T}\|$ to the $n$ letters of $w$ (we illustrate this in an example below). To compute the cardinality of $W_1(m,n,S)$, we use the Inclusion-Exclusion principle and find that
\[ |W_1(m,n,S)| = \sum_{T\supseteq S} (-1)^{|T\setminus S|} |U_1(m,n,T)|.\]
If we replace $u_1$ by $u_2$, knowing that $||u_{1,T}|| = ||u_{2,T}||$ and $|u_{1,T}| = |u_{2,T}|$ implies that the cardinalities of the sets in the above equality remain the same. Thus, the desired equality $|W_1(m,n,S)| = |W_2(m,n,S)|$ follows.
\end{proof}

%end of MCRT part

\begin{example}
Using the aforementioned notation, suppose that $m=9$, $n=4$, $u_1 = 2 3 1$, $T=\{1\}$, and $u_{1,T}=2 3 1 1$. Then $m-||u_{1,T}|| = 9-7 = 2$, and the $10$ ordered weak partitions of $2$ in 4 parts are
\[
(2,0,0,0), (0,2,0,0), (0,0,2,0), (0,0,0,2),\] \[ (1,1,0,0), (1,0,1,0), (1,0,0,1), (0,1,1,0), (0,1,0,1), (0,0,1,1).
\]
Thus, for $u_1 = 2 3 1$, by adding the vector $u_{1,T} = (2,3,1,1)$ to each one of the vectors above, we obtain
\[ U_1(9,4,\{1\}) = \{ 4 3 1 1, 2 5 1 1, 2 3 3 1, 2 3 1 3, 3 4 1 1, 3 3 2 1, 3 3 1 2, 2 4 2 1, 2 4 1 2, 2 3 2 2\}. \]
\end{example}

In view of the MCRT, in order to check whether two words $u, v \in {\mathcal {S}}_{n}$ are super-strongly Wilf equivalent, we must show that for an arbitrary fixed embedding set $E$, every letter in the minimal cluster of $u$ appears as many times as it appears in the corresponding minimal cluster of $v$. To do this we have to count the number of times each letter is inherited from a pre-cluster to a minimal cluster.

An embedding set $E$ can be written in the form (\ref{eq:EmbeddingSet}). Now for each $j \in [n]$ let ${\overline{j}}$ denote the shift of $E$ by $j - 1$ positions to the right, i.e.
\[ {\overline{j}} = (j - 1) + E = \{ j, j + e_{1}, j + e_{1} + e_{2}, \ldots , j + e_{1} + e_{2} + \cdots + e_{r} \} . \]

For a given $u = u_{1} u_{2} \cdots u_{i} \cdots u_{n} \in {\mathcal{S}}_{n}$ let $s = u^{-1} = s_{1} s_{2} \cdots s_{i} \cdots s_{n}$ denote its inverse in ${\mathcal{S}}_{n}$. For each $i \in [n]$ and a fixed embedding set $E$ consider the set $\overline{s_{i}}$. Clearly $s_{i}$ is the position of the letter $i$ in $u$, therefore the set $\overline{s_{i}}$ is precisely the set of all the positions of the letter $i$ in the pre-cluster of $u$ for the embedding $E$.

\begin{proposition} \label{intersectionrule}
Let $u, v \in {\mathcal{S}}_{n}$ and $s = u^{-1}, t = v^{-1}$. Then $u {\sim}_{ss} v$ if and only if
\begin{equation}\label{eq:InterSecRule}
 | \overline{s_{i}} \cap ( \bigcup_{j=i+1}^{n} \overline{s_{j}} ) | = | \overline{t_{i}} \cap ( \bigcup_{j=i+1}^{n} \overline{t_{j}} ) |, 
\end{equation}
for each $i \in [n-1]$ and every embedding set $E$.
\end{proposition}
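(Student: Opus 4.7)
The plan is to combine the Minimal Cluster Rearrangement Theorem (Theorem \ref{MCRT}) with an explicit count of how many times each letter occurs in $m(u,E)$. By MCRT, $u\sim_{ss} v$ is equivalent to $m(u,E)$ and $m(v,E)$ being rearrangements of one another for every embedding set $E$, i.e.\ to $|m(u,E)|_i = |m(v,E)|_i$ for every $i\in\mathbb{P}$ and every $E$. So the proposition will follow once we establish the key identity
\[
|m(u,E)|_i \;=\; (r+1)\;-\;\Bigl|\,\overline{s_i}\cap\bigcup_{j=i+1}^{n}\overline{s_j}\,\Bigr|, \qquad i\in[n-1],
\]
where $r+1=|E|$; for $i=n$ the union is empty and both sides of (\ref{eq:InterSecRule}) equal zero, while for $i>n$ no such letter appears in either cluster.

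To establish the identity I would first unpack the geometric meaning of $\overline{s_i}$. Since $s_i$ is the position of the letter $i$ in $u\in \mathcal{S}_n$, the set $\overline{s_i}=(s_i-1)+E$ records exactly the columns of the pre-cluster $P(u,E)$ at which some shifted copy of $u$ places the letter $i$; in particular $|\overline{s_i}|=r+1$. A column $j$ contributes the letter $i\in[2,n]$ to $m(u,E)$ iff its column maximum equals $i$, which happens precisely when $j\in\overline{s_i}$ (some copy places $i$ at column $j$) and $j\notin\overline{s_k}$ for every $k>i$ (no copy places a larger letter there). This yields the identity directly for $i\in[2,n-1]$ via $|m(u,E)|_i = |\overline{s_i}| - |\overline{s_i}\cap\bigcup_{k>i}\overline{s_k}|$.

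The case $i=1$ requires a small extra argument, since padding $1$s also populate the pre-cluster. Under the hypothesis $1\le e_i\le n-1$, every column of $P(u,E)$ is covered by at least one shifted copy of $u$, so the column maximum at $j$ equals $1$ iff every covering shift contributes a $1$ at $j$. Because the letter $1$ appears at the \emph{unique} position $s_1$ inside $u$, two distinct shifts cannot both place $1$ at a common column. Hence the column maximum is $1$ iff $j\in\overline{s_1}$ and no other shift covers column $j$, which is in turn equivalent to $j\in\overline{s_1}\setminus\bigcup_{k\ge 2}\overline{s_k}$, matching the displayed identity.

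Once the identity is in hand, equating $|m(u,E)|_i=|m(v,E)|_i$ for $i\in[n-1]$ is literally (\ref{eq:InterSecRule}), and MCRT gives the proposition in both directions. The only delicate step is the treatment of $i=1$, where one must separate the padding $1$s from the $1$s inherited from copies of $u$; the observation that a permutation contains the letter $1$ exactly once is precisely what makes the formula uniform across all $i\in[n-1]$.
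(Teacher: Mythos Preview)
Your proposal is correct and follows essentially the same approach as the paper: invoke MCRT to reduce super-strong Wilf equivalence to equality of letter multiplicities in the minimal clusters, then note that the number of occurrences of $i$ in $m(u,E)$ equals $(r+1)$ minus the number of times $i$ is blocked by a larger letter, which is precisely $|\overline{s_i}\cap\bigcup_{j>i}\overline{s_j}|$. Your explicit treatment of the case $i=1$ (using the overlapping condition $1\le e_i\le n-1$ to guarantee every column is covered, so padding $1$s never create extra occurrences) is more detailed than the paper, which handles all $i\in[n-1]$ uniformly without isolating this case.
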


\begin{proof}
Let $w$ and $w'$ be the minimal clusters of $u$ and $v$ with respect to a given embedding set $E$. In view of the MCRT, $u {\sim}_{ss} v$ is equivalent to the statement that the number of occurrences of each letter in $w$ is equal to the number of occurrences of the same letter in $w'$. The largest letter, namely $n$, appears the same number of times in $w$ and $w'$ as nothing can block it from being inherited. So the issue here is really about the letters in $[n-1]$. Let $i \in [n-1]$. Instead of counting the actual number of occurrences of $i$ in $w$ and $w'$, we do count the number of times that $i$ is blocked by bigger letters $j$ in the corresponding pre-clusters of $u$ and $v$ for an arbitrary fixed  embedding set $E$. This number is precisely
$| \overline{s_{i}} \cap ( \bigcup_{j=i+1}^{n} \overline{s_{j}} ) |$ for the word $u$ and
$| \overline{t_{i}} \cap ( \bigcup_{j=i+1}^{n} \overline{t_{j}} ) |$ for the word $v$ respectively and the result follows.
\end{proof}

\begin{proposition}\label{prop:n->n-1}
Let $n\in \mathbb{N}$ and $x, y, z \in \mathbb{P}^*$ such that $x(n-1)ynz \in \mathcal{S}_n$. Then \[ x(n-1)ynz \sim_{ss} xny(n-1)z.\]
\end{proposition}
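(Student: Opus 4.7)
The plan is to apply the intersection rule (Proposition \ref{intersectionrule}) directly. Let $u = x(n-1)ynz$ and $v = xny(n-1)z$, and set $s = u^{-1}$, $t = v^{-1}$. The two words differ only in the positions of the letters $n-1$ and $n$, which are swapped: writing $a = |x|+1$ and $b = |x|+|y|+2$, one has $s_{n-1} = a$, $s_n = b$, while $t_n = a$, $t_{n-1} = b$. Crucially, for every $i \in [n-2]$, the letter $i$ occupies the same position in $u$ and in $v$, so $s_i = t_i$ and therefore $\overline{s_i} = \overline{t_i}$.

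Fix an arbitrary embedding set $E$. I would then verify the identity \eqref{eq:InterSecRule} by cases on $i$. For $i \in [n-2]$, I would observe that
\[ \bigcup_{j=i+1}^{n} \overline{s_j} = \Bigl(\bigcup_{j=i+1}^{n-2} \overline{s_j}\Bigr) \cup \overline{s_{n-1}} \cup \overline{s_n} = \Bigl(\bigcup_{j=i+1}^{n-2} \overline{t_j}\Bigr) \cup \overline{t_{n-1}} \cup \overline{t_n} = \bigcup_{j=i+1}^{n} \overline{t_j}, \]
since $\overline{s_{n-1}} \cup \overline{s_n} = (a-1+E) \cup (b-1+E) = \overline{t_{n-1}} \cup \overline{t_n}$. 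Intersecting with the common set $\overline{s_i} = \overline{t_i}$ then yields equal sets, hence equal cardinalities. For $i = n-1$, both sides of \eqref{eq:InterSecRule} reduce to the single intersection $|\overline{s_{n-1}} \cap \overline{s_n}| = |(a-1+E) \cap (b-1+E)| = |\overline{t_{n-1}} \cap \overline{t_n}|$ by commutativity of intersection. The case $i = n$ is vacuous since the union on the right is empty.

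Since \eqref{eq:InterSecRule} holds for every $i \in [n-1]$ and every embedding set $E$, Proposition \ref{intersectionrule} gives $u \sim_{ss} v$, as claimed. I do not expect any real obstacle here: the proof is essentially the remark that the union $\bigcup_{j=i+1}^n \overline{s_j}$ is insensitive to permutations of its index set, combined with the fact that passing from $u$ to $v$ amounts to relabeling the two largest letters at their existing positions. The only care needed is in keeping track that letters in $[n-2]$ are genuinely unaffected by the swap, which is immediate from the assumption that $x(n-1)ynz \in \mathcal{S}_n$.
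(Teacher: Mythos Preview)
Your proof is correct and follows essentially the same approach as the paper's: both apply Proposition~\ref{intersectionrule} after observing that $s_i = t_i$ for $i \in [n-2]$ while $\{s_{n-1}, s_n\} = \{t_{n-1}, t_n\}$ as sets. The paper simply declares the case $i \in [n-2]$ to hold ``trivially,'' whereas you spell out why the union $\bigcup_{j=i+1}^n \overline{s_j}$ is unchanged under the swap; your treatment of $i = n-1$ is identical to the paper's.
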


\begin{proof}
Set $u=x(n-1)ynz$, $v=xny(n-1)z$, $s=u^{-1} = s_1 s_2 \ldots s_i \ldots s_{n-1} s_n$ and $t=v^{-1} = t_1 t_2 \ldots t_i \ldots t_{n-1} t_n$. Then we have $s_i = t_i$ for $i\in [n-2]$, $s_{n-1} = t_n = |x|+1$ and $s_n = t_{n-1} = |x| + |y| +2$. The equality in (\ref{eq:InterSecRule}) holds trivially for $i\in [n-2]$. For $i=n-1$ we have $|\overline{s_{n-1}} \cap \overline{s_n}| = |\overline{t_n} \cap \overline{t_{n-1}}| = |\overline{t_{n-1}} \cap \overline{t_n}|$ and the result follows.
\end{proof}

\noindent {\it Remark.} In the special case $n=3$, $y=1$ and $x=z=\epsilon$, the previous proposition gives an affirmative answer to conjecture \cite[\S 8.4, Problem (6)]{Kitaev}.

\section{Sequence of differences}\label{Sec:SeqDiff}
Proposition \ref{intersectionrule} implies that the distance between the positions of letters $n-1$ and $n$ is preserved under super-strong Wilf equivalence. It is natural to examine if this is the case for smaller letters too. 
Let $u \in {\mathcal{S}}_{n}$. For all $i \in [n-1]$ define $i^{+}(u)$ as the multiset of distances
\[ i^{+}(u) = \{ d_u(i, j) \, : \, j \in [i+1, n] \}. \]
An equivalent way to define $i^{+}(u)$ is via the inverse $s = s_{1} \cdots s_{i} \cdots s_{j} \cdots s_{n}$ of $u$, as
\[ i^{+}(u) = \{ |s_{i} - s_{j}| \, : \, j \in [i+1, n] \}. \]
Observe that any number in $i^+(u)$ appears at most two times. For example, let $n = 7$ and $u = 2361745$. Then $6^{+}(u) = \{ 2 \}$, $5^{+}(u) = \{ 2, 4 \}$, $4^{+}(u) = \{ 1, 1, 3 \}$,
$3^{+}(u) = \{ 1, 3, 4, 5 \}$, $2^{+}(u) = \{ 1, 2, 4, 5, 6 \}$ and finally $1^{+}(u) = \{ 1, 1, 2, 2, 3, 3 \}$.

\begin{definition}
Let $u, v \in {\mathcal{S}}_{n}$. We say that $u$ is \emph{cross equivalent} to $v$ and denote this by $u\sim_+ v$, if $i^{+}(u) = i^{+}(v)$, for all $i \in [n-1]$. 
\end{definition}
It is easy to check that cross equivalence is indeed an equivalence relation with $u\sim_+ \tilde{u}$. We denote by $[u]_+$ the cross equivalence class of the word $u$. We will show that it gives a necessary condition for super-strong Wilf equivalence.

\begin{proposition}
Let $u, v \in {\mathcal{S}}_{n}$. If $u {\sim}_{ss} v$ then $u {\sim}_{+} v$. The converse does not hold in general.
\end{proposition}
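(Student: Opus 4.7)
\emph{Plan.}
For the forward direction, I would derive cross equivalence directly from the intersection rule (Proposition \ref{intersectionrule}) by restricting it to the single-shift embedding sets $E_k = \{1, 1+k\}$ with $k \in [n-1]$. For such an $E_k$, $\overline{s_i} = \{s_i, s_i+k\}$, and since $s = u^{-1}$ is a permutation, the element $s_i$ (resp.\ $s_i+k$) lies in some $\overline{s_j}$ with $j > i$ exactly when there exists $j > i$ with $s_j = s_i - k$ (resp.\ $s_j = s_i + k$); these two conditions are disjoint. Hence
\[
\Bigl| \overline{s_i} \cap \bigcup_{j=i+1}^{n} \overline{s_j} \Bigr| \;=\; \#\{\, j > i : |s_i - s_j| = k \,\},
\]
which is precisely the multiplicity of $k$ in the multiset $i^{+}(u)$. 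Equating this quantity with the analogous one for $v$ via Proposition \ref{intersectionrule} for every $k \in [n-1]$ forces $i^{+}(u) = i^{+}(v)$ as multisets, and varying $i$ then yields $u \sim_{+} v$.

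For the failure of the converse, I would exhibit a small counterexample: take $u = 2143$ and $v = 3412$ in $\mathcal{S}_{4}$. Both permutations are self-inverse, and a direct tabulation gives $1^{+}(u) = 1^{+}(v) = \{1,1,2\}$, $2^{+}(u) = 2^{+}(v) = \{2,3\}$, and $3^{+}(u) = 3^{+}(v) = \{1\}$, so $u \sim_{+} v$. To disqualify super-strong Wilf equivalence I would invoke the MCRT (Theorem \ref{MCRT}) with $E = \{1, 2, 4\}$: filling the pre-clusters row by row yields
\[
m(u, E) = 2\,2\,4\,4\,3\,4\,3 \quad\text{and}\quad m(v, E) = 3\,4\,4\,3\,4\,1\,2,
\]
which are not rearrangements of one another (the first has two $2$'s and no $1$; the second has one of each). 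Hence $u \nsim_{ss} v$.

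The plan is essentially routine. The only mildly subtle observation is the first: the two-element subfamily $\{E_k\}_{k\in[n-1]}$ already saturates the intersection-rule conditions associated with cross equivalence, whereas richer embedding sets (such as the three-element set $E = \{1,2,4\}$ used in the counterexample) are needed to witness the additional rigidity of $\sim_{ss}$. Once this structural distinction is recognized, both parts reduce to short, explicit calculations.
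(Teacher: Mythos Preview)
Your argument is correct. The forward direction follows the same underlying idea as the paper: both restrict to two-element embedding sets $E=\{1,1+k\}$ and read off the multiplicity of $k$ in $i^{+}(u)$. The paper argues this by contrapositive, picking a single bad distance $e$ and comparing how many times the letter $i$ survives into the minimal cluster; you go forward and phrase the same count via the intersection rule (Proposition~\ref{intersectionrule}), obtaining the clean identity
\[
\Bigl|\,\overline{s_i}\cap\bigcup_{j>i}\overline{s_j}\,\Bigr| \;=\; \#\{\,j>i : |s_i-s_j|=k\,\}.
\]
This is a tidier packaging of the same computation, and it makes transparent why two-element embedding sets suffice for cross equivalence.

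For the converse your counterexample is different and smaller than the paper's. The paper uses $u=2351647$, $v=6471532\in\mathcal{S}_7$ with $E=\{1,2,5\}$; you use $u=2143$, $v=3412\in\mathcal{S}_4$ with $E=\{1,2,4\}$. Your computations of the minimal clusters and of the multisets $i^{+}$ check out, so the pair $(2143,\,3412)$ is indeed a valid witness that $\sim_{+}$ does not imply $\sim_{ss}$, already at $n=4$.
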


\begin{proof} 
Suppose that $u\nsim_+ v$ so that there exists an $i\in [n-1]$ such that $i^+(u) \neq i^+(v)$. We will show that $u\nsim_{ss} v$. Consider the set $D$ of all distances $d \in [n-1]$ such that 
\[ |\{d: d\in i^+(u)\}| \neq  |\{d: d\in i^+(v)\}|.\]
Let $e:= \min D$ and consider the embedding $E=\{1, 1+e\}$. We have the following two cases: $i$ appears in only one of the two multisets or $i$ appears in both of them, once and twice respectively.

Firstly, without loss of generality, $e\in i^+(u)\setminus i^+(v)$. Since $e\notin i^+(v)$, the letter $i$ will appear twice in the minimal cluster of $v$, since no letter greater than $i$ can block it. On the other hand, $i$ will be blocked at least once in the pre-cluster of $u$.

In the second case, without loss of generality, $e$ appears once in $i^+(u)$ and twice in $i^+(v)$. Then $i$ will be blocked twice in the pre-cluster of $v$, and exactly once in the pre-cluster of $u$.

%Suppose that $u {\sim}_{ss} v$. Suppose that $s'_{i+1} < s_{i} < s_{n}$. Then by Lemma \ref{comparison} we also get $t'_{i+1} < t_{i} < t_{n}$ and the letters $s_{i}$ and $t_{i}$ appear in the same position when we write the elements of ${\Sigma}_{i}(u)$ and ${\Sigma}_{i}(v)$, respectively in ascending order. Then it is straightforward to check that
%\[ i^{+}(u) = \{ |s_{i} - s_{j}| : j \in [i+1, n] \} = \{ |s_{i} - s'_{j}| : j \in [i+1, n] \} = \]
%\[ =  \{ |t_{i} - t'_{j}| : j \in [i+1, n] \} = \{ |t_{i} - t_{j}| : j \in [i+1, n] \} = i^{+}(v). \]
%Now suppose that $s_{i} < s'_{i+1}$. If $t_{i} < t'_{i+1}$ the result follows immediately. If, on the other hand, $t_{i} > t'_{n}$ Lemma \ref{comparison} implies that ${\Delta}_{i}(s) =  {\Delta}_{i}(t) = (d, d, \ldots , d)$, for some positive integer $d$.

Let us now construct a counterexample to show that the converse implication is not true. 
% Actually later we will show that the converse holds for $n \leq 6$.
Set $u = 2351647$ and $v = 6471532$. It is easy to check that $u {\sim}_{+} v$. On the other hand, if we consider the embedding set $E = \{ 1, 2, 5 \}$
the corresponding minimal clusters for $u$ and $v$ are respectively $u_{min} = 235566776{\underline{4}}7$ and $v_{min} = 66776572532$. Clearly the letter $4$ appears only in $u_{min}$, so we immediately obtain $u {\nsim}_{ss} v$, by the MCRT.
\end{proof}

Proposition \ref{intersectionrule} gives a necessary and sufficient condition for super-strong Wilf equivalence. Nevertheless, it has not yet reached a concrete form involving the permutations in question. Using Inclusion - Exclusion Principle to simplify it, we are led to the following definition.

\begin{definition}\label{def:SeqDiff}
Let $u \in {\mathcal{S}}_{n}$ and $s = u^{-1}$. Let $s = s_{1} \cdots s_{i} \cdots s_{n}$. For $i = n - 1$ down to $1$ consider the
proper suffix $s_{i} \cdots s_{n}$ of $s$ and its alphabet set
${\Sigma}_{i}(s) = alph(s_{i} \cdots s_{n}) = \{ s_{i}^{(i)}, \ldots , s_{n}^{(i)} \}$,  where $s_{i}^{(i)} < \cdots < s_{n}^{(i)}$.
We define ${\Delta}_{i}(s)$ to be the vector of \emph{consecutive differences} in ${\Sigma}_{i}(s)$, i.e.
\[ {\Delta}_{i}(s) = ( s_{i+1}^{(i)} - s_{i}^{(i)}, \ldots , s_{n}^{(i)} - s_{n-1}^{(i)} ). \]
\end{definition}

As already mentioned in the Introduction, $\Delta_i(u^{-1})$ is the vector of distances between letters in $u$ that are greater than or equal to $i$ as they appear sequentially in $u$ from left to right. Note that since $\Delta_1(u^{-1})$ is always the $(n-1)$-tuple with all of its entries equal to $1$, we usually consider $\Delta_i(u^{-1})$ for $i = n-1$ down to $2$.

\begin{example}\label{ex:pyramiddiff}
Let $u = 21365874$. Then $s = u^{-1} = 21385476$. The sequence of differences for $s$ is the following:
\begin{center}
$\Delta_7(s) = (1)$ \\
$\Delta_6(s) = (2,1)$ \\
$\Delta_5(s) = (1,1,1)$ \\
$\Delta_4(s) = (1,1,1,1)$ \\
$\Delta_3(s) = (1,1,1,1,1)$ \\
$\Delta_2(s) = (2,1,1,1,1,1)$ \\
\end{center}
\end{example}

The main result of this section and the whole article is the following.

\begin{theorem}\label{seqofdiff}
Let $u, v \in {\mathcal{S}}_{n}$ and $s = u^{-1}, t = v^{-1}$. Then $u {\sim}_{ss} v$ if and only if ${\Delta}_{i}(s) = {\Delta}_{i}(t)$, for each
$i \in [2, n-1]$.
\end{theorem}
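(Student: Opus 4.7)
My plan is to invoke the intersection rule of Proposition~\ref{intersectionrule} and translate it, via inclusion--exclusion, into a condition on the positions of large letters in $u$ and $v$. For each $i \in [n]$, let $P_i(u) = \{s_i, s_{i+1}, \ldots, s_n\}$ be the set of positions in $u$ of the letters at least $i$, and set $g_i(u,E) = |\bigcup_{j \geq i} \overline{s_j}|$. Since $\bigcup_{j \geq i} \overline{s_j} = (P_i(u)-1) + E$ as a set, $g_i(u,E)$ depends on $u$ only through the sumset $(P_i(u)-1) + E$. A short calculation gives $|\overline{s_i} \cap \bigcup_{j>i}\overline{s_j}| = |E| - g_i(u,E) + g_{i+1}(u,E)$; since $g_n(u,E) = |E|$ and $g_1(u,E)$ depends only on $n$ and $E$, a downward induction on $i$ shows that the intersection rule is equivalent to the condition
\[
(\star) \qquad g_i(u,E) = g_i(v,E) \quad \text{for every } i \in [2, n-1] \text{ and every embedding set } E.
\]

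For the easy direction ($\Leftarrow$), if $\Delta_i(s) = \Delta_i(t)$ for all $i \in [2, n-1]$, then $P_i(u)$ and $P_i(v)$ have the same consecutive-difference vector when sorted, so one is a translate of the other. Since sumset sizes are translation invariant in the first factor, $(\star)$ holds and $u \sim_{ss} v$ by Step~1 combined with Proposition~\ref{intersectionrule}.

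For the hard direction ($\Rightarrow$), I fix $i \in [2, n-1]$ and write $A = P_i(u)$, $B = P_i(v)$, $k = |A| = n-i+1$, and $\vec{d} = \Delta_i(s) = (d_1, \ldots, d_{k-1})$. Applying inclusion--exclusion to $g_i(u,E) = |\bigcup_{a \in A}(a-1+E)|$ yields
\[
g_i(u,E) = \sum_{\emptyset \neq T \subseteq E} (-1)^{|T|+1} N_A(\Delta(T)),
\]
where $\Delta(T)$ is the consecutive-difference vector of $T$ sorted, and $N_X(\vec{e})$ counts the elements $a \in X$ such that all the partial sums $a, a + e_1, a + e_1 + e_2, \ldots$ lie in $X$. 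Since $(\star)$ holds for every embedding set, Möbius inversion on the subset lattice of $E$ forces $N_A(\vec{e}) = N_B(\vec{e})$ for every consecutive-difference vector $\vec{e}$. Taking $\vec{e} = \vec{d}$ (of length $k-1$), we observe $N_A(\vec{d}) = 1$, witnessed by $\min A$; hence $N_B(\vec{d}) = 1$, so $B$ contains a $k$-chain with consecutive differences $\vec{d}$. Since $|B| = k$, this chain exhausts $B$, so $B$ is a translate of $A$ and therefore $\Delta_i(t) = \vec{d} = \Delta_i(s)$.

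The main obstacle lies in the hard direction: setting up the Möbius inversion rigorously on the collection of embedding sets (whose consecutive differences are restricted to $[1, n-1]$) and verifying that every consecutive-difference tuple $\vec{e}$ of interest is realised by some such embedding set. A related subtlety is that a subset of an embedding set need not itself be one, since deleting an element can produce a gap exceeding $n-1$; this may require restricting the inversion to a carefully chosen sub-lattice or, alternatively, extending the intersection rule to non-overlapping embedding sets via the extended minimal clusters introduced earlier in the paper.
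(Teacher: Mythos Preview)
Your approach is correct and takes a genuinely different, more economical route than the paper. For sufficiency, the paper expands $|\overline{s_i}\cap\bigcup_{j>i}\overline{s_j}|$ by inclusion--exclusion and matches each summand to its counterpart for $v$ term by term, invoking Lemmas~\ref{differences} and~\ref{comparison} through a three-case positional analysis; your identity $|\overline{s_i}\cap\bigcup_{j>i}\overline{s_j}|=|E|+g_{i+1}-g_i$ reduces the intersection rule to $(\star)$, after which ``$\Delta_i(s)=\Delta_i(t)$ makes $P_i(u)$ a translate of $P_i(v)$, so $|P_i(u)+E|=|P_i(v)+E|$'' finishes in one line. For necessity, the paper passes through cross equivalence and the palindrome Lemma~\ref{palindrome} to pin down the first discrepancy, then exhibits an explicit embedding set on which the minimal clusters differ; your argument instead recovers $|\bigcap_{e\in E_0}(A+e)|$ from the values $|A+T|$, $T\subseteq E_0$, via the dual inclusion--exclusion formula, and then uses $|E_0|=|A|=|B|$ to force $B$ to be a translate of $A$. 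The paper's route is constructive (it produces a witnessing $E$), yours is structural and avoids all case analysis.

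Two small points. First, a bookkeeping slip: $|\bigcap_{e\in T}(A+e)|$ equals $N_A(\Delta(T)^R)$ rather than $N_A(\Delta(T))$, so to obtain $N_A(\vec d)=1$ you should take $E_0$ with $\Delta(E_0)=\vec d^{\,R}$; this changes nothing in the argument. Second, the obstacle you flag actually dissolves here, and no extension to non-overlapping clusters is needed: the only $E_0$ you require is (a translate of) $-P_i(u)$, whose diameter is $\max P_i(u)-\min P_i(u)\le n-1$. Hence every nonempty $T\subseteq E_0$, translated to start at $1$, has all consecutive gaps in $[1,n-1]$ and is already a valid embedding set, so the identity $|\bigcap_{e\in E_0}(A+e)|=\sum_{\varnothing\ne T\subseteq E_0}(-1)^{|T|+1}|A+T|$ together with $(\star)$ immediately yields $|\bigcap_{e\in E_0}(A+e)|=|\bigcap_{e\in E_0}(B+e)|$. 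Your broader claim that $N_A(\vec e)=N_B(\vec e)$ for \emph{every} $\vec e$ is more than you can extract and more than you need; restricting to this single $E_0$ suffices.
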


To prove this, we will need the following technical lemmas.

\begin{lemma} \label{differences}
Let $m \in [n]$ and $i_{1} < i_{2} < \ldots < i_{m}$, $j_{1} < j_{2} < \ldots < j_{m}$ be indices in $[n]$,
where $i_{1} \leq j_{1}$. Let $d_{l} = i_{l+1} - i_{l}$ and $f_{l} = j_{l+1} - j_{l}$, for $l \in [m-1]$, be respectively their consecutive differences.
Then the equality
\[ | {\overline{i_{1}}} \cap {\overline{i_{2}}} \cap \ldots \cap {\overline{i_{m}}}| =
   | {\overline{j_{1}}} \cap {\overline{j_{2}}} \cap \ldots \cap {\overline{j_{m}}}| \]
holds for every embedding set $E$ if and only if $(d_{1}, d_{2}, \ldots , d_{m-1}) = (f_{1}, f_{2}, \ldots , f_{m-1})$.
\end{lemma}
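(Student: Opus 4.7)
The plan starts from the observation that $\overline{j} = (j-1)+E$ is a translate of $E$, so $\bigl|\bigcap_{l=1}^{m}\overline{i_l}\bigr|$ is invariant under a common shift of the $i_l$'s. Writing $D_0 = 0$ and $D_l = d_1 + \cdots + d_l$ for $l \in [m-1]$, translating by $-(i_1-1)$ gives
\[ \Bigl|\bigcap_{l=1}^{m}\overline{i_l}\Bigr| = \Bigl|\bigcap_{l=0}^{m-1}(E + D_l)\Bigr|, \]
which depends only on the tuple $(d_1,\ldots,d_{m-1})$. This immediately establishes the ``if'' direction: when the $d$- and $f$-tuples coincide, so do the $D$- and $F$-tuples (with $F_l = f_1+\cdots+f_l$), and the two intersections in the statement are mere translates of one another.

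For the converse, assume $(d_1,\ldots,d_{m-1}) \neq (f_1,\ldots,f_{m-1})$, equivalently $(D_1,\ldots,D_{m-1}) \neq (F_1,\ldots,F_{m-1})$. I would exhibit a single embedding set separating the two cardinalities, namely
\[ E^* = \{1,\; 1+d_{m-1},\; 1+d_{m-1}+d_{m-2},\; \ldots,\; 1+d_{m-1}+\cdots+d_1\}, \]
the embedding set of size exactly $m$ whose shift vector is the reversal $(d_{m-1},d_{m-2},\ldots,d_1)$ of the $d$-tuple. An element $q$ belongs to $\bigcap_{l=0}^{m-1}(E^* + D_l)$ iff the $m$-element set $\{q, q-D_1,\ldots,q-D_{m-1}\}$ is contained in $E^*$; since $|E^*| = m$, containment forces equality, pinning down $q = \max E^* = 1 + D_{m-1}$. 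A direct check using $1+D_{m-1}-D_l = 1+d_{l+1}+\cdots+d_{m-1} \in E^*$ confirms that $q = 1+D_{m-1}$ indeed works, so the intersection has cardinality exactly $1$.

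Repeating the same analysis with $F_l$ in place of $D_l$, any valid $q$ must satisfy $\{q, q-F_1,\ldots,q-F_{m-1}\} = E^*$. This forces $F_{m-1} = D_{m-1}$ (so $q = 1+D_{m-1}$) and then $\{F_l\}_{l=0}^{m-1} = \{D_l\}_{l=0}^{m-1}$ as sets; being both strictly increasing enumerations of the same set, they must agree as ordered tuples, contradicting the hypothesis. Hence $\bigcap_{l=0}^{m-1}(E^*+F_l) = \emptyset$, and $E^*$ witnesses the desired asymmetry. The main subtlety I expect is the choice of $E^*$: naive candidates such as $\{1, 1+D_{m-1}\}$ often return zero on both sides and fail to distinguish them. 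The decisive idea is to take $|E^*| = m$ exactly, which rigidifies the intersection to be either the single element $1+D_{m-1}$ (when the shift vector is the reverse of $(d_1,\ldots,d_{m-1})$) or empty.
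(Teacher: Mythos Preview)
Your proof is correct and follows essentially the same approach as the paper's. Both directions use the same ideas: the ``if'' direction exploits that $\overline{j}=(j-1)+E$ is a translate of $E$ (the paper spells this out as the explicit bijection $x\mapsto x+d$ with $d=j_1-i_1$, while you state the translation invariance abstractly), and the ``only if'' direction uses exactly the same witness embedding set, since the paper's choice $E=\{1,\,1+(i_m-i_{m-1}),\,\ldots,\,1+(i_m-i_1)\}$ has shift vector $(d_{m-1},d_{m-2},\ldots,d_1)$ and hence coincides with your $E^*$. Your pigeonhole argument (an $m$-element set of shifts forced to equal the $m$-element set $E^*$, hence $\{F_l\}=\{D_l\}$ as increasing tuples) is precisely what underlies the paper's assertion that $|Y_m|=1$ forces $j_m=j_{m-1}+(i_m-i_{m-1})=\cdots=j_1+(i_m-i_1)$; you have simply made that step more explicit and organized it as the contrapositive.
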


\begin{proof}
Suppose that $(d_{1}, d_{2}, \ldots , d_{m-1}) = (f_{1}, f_{2}, \ldots , f_{m-1})$. Then for each $k \in [m]$ we obtain $i_{k} = \sum_{p=1}^{k-1} d_{p} + i_{1}$ and $j_{k} = \sum_{p=1}^{k-1} f_{p} + j_{1}$, so that $j_{k} - i_{k} = j_{1} - i_{1}$. Let $d$ be this common difference of indices.

Now let $E$ be an embedding set and $X_{m} = {\overline{i_{1}}} \cap {\overline{i_{2}}} \cap \ldots \cap {\overline{i_{m}}}$ and
$Y_{m} = {\overline{j_{1}}} \cap {\overline{j_{2}}} \cap \ldots \cap {\overline{j_{m}}}$. We claim that the mapping $x \mapsto x + d$ is
a bijection from $X_{m}$ to $Y_{m}$. This is clearly a one-to-one mapping, so it suffices to show that for  each $x \in X_{m}$ we have $x + d \in Y_{m}$ and that for each $y \in Y_{m}$ we have $y - d \in X_{m}$. We show the former, the latter is left to the reader.

Let $(e_1, e_2, \ldots, e_r)$ be the vector that characterizes the embedding set $E$. For technical reasons we set $e_{0} = 0$. Since $x \in X_{m}$, there exists a strictly descending sequence of indices
${\alpha}_{1} > \cdots > {\alpha}_{l} > {\alpha}_{l+1} > \cdots > {\alpha}_{m}$, where ${\alpha}_{l} \in [0, r]$ and such that $x = i_{l} + e_{1} + \cdots + e_{{\alpha}_{l}}$, for each $l \in [m]$. This implies that $d_{l} = i_{l+1} - i_{l} = e_{{\alpha}_{l+1}+1} + \cdots + e_{{\alpha}_{l}}$, for each $l \in [m-1]$. Now since $d_{l} = f_{l}$, it
follows that $f_{l} = j_{l+1} - j_{l} = e_{{\alpha}_{l+1}+1} + \cdots + e_{{\alpha}_{l}}$, for each $l \in [m-1]$.
Then $x + d = x + (j_{1} - i_{1}) =  x + (j_{l} - i_{l}) = (i_{l} + e_{1} + \cdots + e_{{\alpha}_{l}}) + j_{l} - i_{l} = j_{l} + e_{1} + \cdots + e_{{\alpha}_{l}}$, for each $l \in [m]$, therefore $x + d \in Y_{m}$.

For the converse implication, suppose that $|X_{m}| = |Y_{m}|$, for every embedding set $E$. Consider, in particular the embedding set
\[ E = \{ 1, 1 + (i_{m} - i_{m-1}), 1 + (i_{m} - i_{m-2}), \ldots , 1 + (i_{m} - i_{1}) \}. \]
Then it is easy to see that $X_{m} = \{i_{m} \}$, therefore $|Y_{m}| = 1$. Now the only way that this can be done is when
\[ j_{m} = j_{m-1} + (i_{m} - i_{m-1}) = j_{m-2} + (i_{m} - i_{m-2}) = \cdots = j_{1} + (i_{m} - i_{1}). \]
The latter immediately implies that $j_{l+1} - j_{l} = i_{l+1} - i_{l}$, for each $l \in [m-1]$, as required.
\end{proof}

\begin{lemma} \label{comparison}
Let $u, v \in {\mathcal{S}}_{n}$ and let $s = s_{1} \cdots s_{i} \cdots s_{n}$, $t = t_{1} \cdots t_{i} \cdots t_{n}$ respectively be their inverses. Suppose that ${\Delta}_{i}(s) = {\Delta}_{i}(t)$ and ${\Delta}_{i+1}(s) = {\Delta}_{i+1}(t)$. 
\begin{enumerate}
\item If $s_{j}^{(i+1)} < s_{i} < s_{j+1}^{(i+1)}$, for some $j \in [i+1, n-1]$,
then $t_{j}^{(i+1)} < t_{i} < t_{j+1}^{(i+1)}$ with  $s_{i} - s_{j}^{(i+1)} = t_{i} - t_{j}^{(i+1)}$ and
$s_{j+1}^{(i+1)} - s_{i} = t_{j+1}^{(i+1)} - t_{i}$. 
\item If $s_{i} < s_{i+1}^{(i+1)}$ then 
\begin{enumerate}
\item either $t_{i} < t_{i+1}^{(i+1)}$ and $s_{i+1}^{(i+1)} - s_{i} = t_{i+1}^{(i+1)} - t_{i}$,
\item or $t_{i} > t_{n}^{(i+1)}$ and ${\Delta}_{i}(s) = {\Delta}_{i}(t) = (d, d, \ldots , d)$, where \[d = s_{i+1}^{(i+1)} - s_{i} = t_{i} - t_{n}^{(i+1)}.\]
\end{enumerate}
\item If $s_{i} > s_{n}^{(i+1)}$ then 
\begin{enumerate}
\item either $t_{i} > t_{n}^{(i+1)}$ and $s_{i} - s_{n}^{(i+1)} = t_{i} - t_{n}^{(i+1)}$,
\item or $t_{i} < t_{i+1}^{(i+1)}$ and ${\Delta}_{i}(s) = {\Delta}_{i}(t) = (d, d, \ldots , d)$, where \[d = s_{i} - s_{n}^{(i+1)} = t_{i+1}^{(i+1)} - t_{i}.\]
\end{enumerate}
\end{enumerate}
\end{lemma}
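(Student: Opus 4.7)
The plan is to view $\Sigma_i(s) = \Sigma_{i+1}(s) \cup \{s_i\}$ and analyze how inserting the letter $s_i$ into the sorted set $\Sigma_{i+1}(s)$ transforms $\Delta_{i+1}(s)$ into $\Delta_i(s)$. Exactly one of three mutually exclusive operations occurs, depending on where $s_i$ falls relative to $\Sigma_{i+1}(s)$: \emph{prepend} type (P), when $s_i < s_{i+1}^{(i+1)}$, producing $\Delta_i(s)$ by prefixing $\Delta_{i+1}(s)$ with the positive entry $s_{i+1}^{(i+1)} - s_i$; \emph{append} type (A), when $s_i > s_n^{(i+1)}$, analogously appending $s_i - s_n^{(i+1)}$; and \emph{split} type (S), when $s_j^{(i+1)} < s_i < s_{j+1}^{(i+1)}$ for some $j \in [i+1, n-1]$, in which case the entry $s_{j+1}^{(i+1)} - s_j^{(i+1)}$ of $\Delta_{i+1}(s)$ is replaced by two positive entries $s_i - s_j^{(i+1)}$ and $s_{j+1}^{(i+1)} - s_i$ that sum to it. The same trichotomy applies to $t_i$ relative to $\Sigma_{i+1}(t)$, and I would conduct a case analysis on the nine combinations of insertion types for $(s_i, t_i)$.

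Write $\Delta_{i+1}(s) = \Delta_{i+1}(t) = (a_1, \ldots, a_k)$. If both $s_i$ and $t_i$ are of type (S), at positions $j$ and $j'$ respectively, I would compare $\Delta_i(s)$ and $\Delta_i(t)$ entry by entry; assuming $j < j'$, the entry on the $s$-side at the split position would equal the full unsplit value on the $t$-side, forcing one of the two positive split pieces to vanish, a contradiction. Hence $j = j'$ and the split values coincide, which is exactly case 1 of the lemma. Next, I would rule out the mixed combinations (S)/(P), (S)/(A), (P)/(S), (A)/(S) by the same positivity argument: at the position where the split sits on one side, the other side carries an unsplit entry of $\Delta_{i+1}$, again forcing a split piece to be zero. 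The pure cases (P)/(P) and (A)/(A) are immediate from equating the single prepended (resp.\ appended) entries on both sides, yielding cases 2(a) and 3(a).

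The only genuine ambiguity is the mixed case (P)/(A), which gives $(d_s, a_1, \ldots, a_k) = (a_1, \ldots, a_k, d_t)$; equating coordinates forces $d_s = a_1 = a_2 = \cdots = a_k = d_t$, so all entries of $\Delta_i(s) = \Delta_i(t)$ coincide with the common value $d = s_{i+1}^{(i+1)} - s_i = t_i - t_n^{(i+1)}$. This is precisely case 2(b), and case 3(b) follows by the symmetric argument. The main obstacle I anticipate is bookkeeping: keeping the index shift between $\Delta_{i+1}$ and $\Delta_i$ straight across the split point, and carefully stating the positivity constraints $b, c \geq 1$ on the split pieces that exclude spurious overlaps between different insertion types. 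Once that scaffolding is organized, each sub-case reduces to a short algebraic verification.
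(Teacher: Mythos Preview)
Your proposal is correct and follows essentially the same approach as the paper. Both arguments rest on the trichotomy you call (P)/(A)/(S), compare $\Delta_i(s)$ and $\Delta_i(t)$ entry by entry against the common $\Delta_{i+1}$, and use positivity of the split pieces to eliminate the mixed (S)/(P), (S)/(A), (P)/(S), (A)/(S) combinations and to force equal split positions in the (S)/(S) case; the paper merely organizes the case analysis by the position of $s_i$ first rather than as a symmetric $3\times 3$ table.
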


\begin{proof}
Suppose that $s_{j}^{(i+1)} < s_{i} < s_{j+1}^{(i+1)}$, for some $j \in [i+1, n-1]$. First we will show that $t_{i+1}^{(i+1)} < t_{i} < t_{n}^{(i+1)}$. Indeed, if $t_{i} < t_{i+1}^{(i+1)}$ then ${\Delta}_{i}(s) = {\Delta}_{i}(t)$ yields $s_{j+1}^{(i+1)} - s_{i} = t_{j+1}^{(i+1)} - t_{j}^{(i+1)}$. Since
${\Delta}_{i+1}(s) = {\Delta}_{i+1}(t)$, we get $t_{j+1}^{(i+1)} - t_{j}^{(i+1)} = s_{j+1}^{(i+1)} - s_{j}^{(i+1)}$. Thus we obtain
$s_{j+1}^{(i+1)} - s_{i} = s_{j+1}^{(i+1)} - s_{j}^{(i+1)}$, a contradiction. In a similar manner we cannot have $t_{i} > t_{n}^{(i+1)}$.
Therefore we necessarily get $t_{i+1}^{(i+1)} < t_{i} < t_{n}^{(i+1)}$.

Moreover, we will show that $t_{j}^{(i+1)} < t_{i} < t_{j+1}^{(i+1)}$. Suppose that $t_{j+k}^{(i+1)} < t_{i} < t_{j+k+1}^{(i+1)}$, for a suitable positive integer $k$. Then since ${\Delta}_{i}(s) = {\Delta}_{i}(t)$, we obtain $s_{i} - s_{j}^{(i+1)} = t_{j+1}^{(i+1)} - t_{j}^{(i+1)}$. Since the latter is equal to $s_{j+1}^{(i+1)} - s_{j}^{(i+1)}$ due to ${\Delta}_{i+1}(s) = {\Delta}_{i+1}(t)$, we obtain $s_{i} - s_{j}^{(i+1)} = s_{j+1}^{(i+1)} - s_{j}^{(i+1)}$, a contradiction. If $t_{j-k}^{(i+1)} < t_{i} < t_{j-k+1}^{(i+1)}$, for a suitable positive integer $k$, we interchange the role of $s$ and $t$ and work in a similar fashion. The equality $s_{j+1}^{(i+1)} - s_{i} = t_{j+1}^{(i+1)} - t_{i}$ follows from the assumption that ${\Delta}_{i}(s) = {\Delta}_{i}(t)$.

Now suppose that $s_{i} < s_{i+1}^{(i+1)}$. Then we show that $t_{i} \notin ( t_{i+1}^{(i+1)}, t_{n}^{(i+1)})$. Indeed, if the contrary holds, then by interchanging the roles of $s$ and $t$ we get that $s_{i} \in ( s_{i+1}^{(i+1)}, s_{n}^{(i+1)})$, which contradicts our assumption. Therefore, we either have
$t_{i} < t_{i+1}^{(i+1)}$ or $t_{i} > t_{n}^{(i+1)}$. In the former case the equality $s_{i+1}^{(i+1)} - s_{i} = t_{i+1}^{(i+1)} - t_{i}$ follows directly by the assumption that ${\Delta}_{i}(s) = {\Delta}_{i}(t)$. For the latter one we let $d = s_{i+1}^{(i+1)} - s_{i}$, $d' = t_{i} - t_{n}^{(i+1)}$ and
$d_{k} = s_{i+k+1}^{(i+1)} - s_{i+k}^{(i+1)} = t_{i+k+1}^{(i+1)} - t_{i+k}^{(i+1)}$, for $k \in [n-i-1]$. Since ${\Delta}_{i}(s) = {\Delta}_{i}(t)$, we finally obtain $d=d_{1}$, $d_{k} = d_{k+1}$, for $k \in [n-i-1]$ and $d_{n-i-1} = d'$. Thus all the consecutive differences are equal.

For the case where $s_{i} > s_{n}^{(i+1)}$, similar arguments as in the latter case apply.
\end{proof}

\begin{lemma}\label{palindrome}
Suppose that $u\sim_+ v$ and there exists an $i\in [2,n-2]$ such that $\Delta_{i}(s) \neq \Delta_{i}(t)$ and $\Delta_{i+1}(s) = \Delta_{i+1}(t)$. Then $\Delta_{i}(s) = \widetilde{\Delta_{i}(t)}$ and $\Delta_{i+1}(s)$ is a palindrome.
\end{lemma}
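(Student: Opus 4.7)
The plan is to reduce the hypothesis to a clean statement about inserting a single point into a fixed sorted ladder, and then to exploit a generating function identity. Write $p=s_i$ and $q=t_i$, and let $\{a_1<\cdots<a_m\}=\Sigma_{i+1}(s)$ and $\{b_1<\cdots<b_m\}=\Sigma_{i+1}(t)$, where $m=n-i\geq 2$. First I would use $\Delta_{i+1}(s)=\Delta_{i+1}(t)$ to conclude $b_l=a_l+c$ for a fixed integer $c$ and every $l$. Since consecutive-difference vectors and absolute-value multisets are both translation-invariant, after replacing $q$ by $q-c$ I may assume $c=0$ and treat $u$ and $v$ as inserting $p$ and $q$ respectively into the same ladder $\{a_1,\ldots,a_m\}$. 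Under this reduction, $u\sim_+ v$ (applied with the letter $i$) becomes the multiset equality
\[
\{|p-a_l|:l\in[m]\}=\{|q-a_l|:l\in[m]\},
\]
and the hypothesis $\Delta_i(s)\neq\Delta_i(t)$ forces $p\neq q$, since $p=q$ would make the two augmented sorted ladders identical.

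The heart of the argument is then to show that this multiset equality together with $p\neq q$ forces $\{a_l\}$ to be symmetric about $(p+q)/2$. My plan is to set $A(y)=\sum_{l=1}^m y^{a_l}$ and observe that
\[
\sum_{l=1}^m \bigl(y^{|p-a_l|}+y^{-|p-a_l|}\bigr)=y^{-p}A(y)+y^p A(y^{-1}),
\]
so the multiset equality yields $y^{-p}A(y)+y^pA(y^{-1})=y^{-q}A(y)+y^qA(y^{-1})$. Rearranging gives $(y^{-p}-y^{-q})A(y)=(y^q-y^p)A(y^{-1})$, and factoring out the nonzero Laurent polynomial $y^q-y^p$ leaves $A(y)=y^{p+q}A(y^{-1})$. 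Comparing monomials then reads off $\{a_l\}=\{p+q-a_l\}$ as sets of integers, i.e.\ $a_l+a_{m+1-l}=p+q$ for every $l$. In particular the consecutive differences $d_l=a_{l+1}-a_l$ satisfy $d_l=d_{m-l}$, so $\Delta_{i+1}(s)=(d_1,\ldots,d_{m-1})$ is a palindrome.

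For $\Delta_i(s)=\widetilde{\Delta_i(t)}$ I would then invoke the reflection $\rho(x)=p+q-x$. The set identity just proved shows $\rho$ carries $\Sigma_i(s)=\{a_1,\ldots,a_m,p\}$ bijectively onto $\Sigma_i(t)=\{a_1,\ldots,a_m,q\}$ while reversing order. Hence the sorted sequence of $\Sigma_i(t)$ is the term-by-term image of the sorted sequence of $\Sigma_i(s)$ read in reverse, and a direct computation of consecutive differences gives $\Delta_i(t)=\widetilde{\Delta_i(s)}$.

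The main obstacle is the symmetry step. A moment calculation from $\sum_l(p-a_l)^2=\sum_l(q-a_l)^2$ already pins down $p+q=2\bar{a}$ (with $\bar{a}$ the mean of the $a_l$'s), but this alone does not imply that $\{a_l\}$ itself is symmetric; avoiding that would otherwise require a case analysis on the insertion slots of $p$ and $q$ in the spirit of Lemma \ref{comparison}. The generating function identity above is the cleanest way I know to upgrade the moment information to full set symmetry in a single stroke.
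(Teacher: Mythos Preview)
Your argument is correct and is a genuinely different---and considerably cleaner---route than the one taken in the paper.

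The paper's proof proceeds by a direct case analysis on the insertion slot of the letter $i$ relative to the ladder of larger letters: first the extremal case where $i$ lies to the left (or right) of all larger letters, then the internal case, which itself splits into subcases according to whether the insertion slots in $u$ and $v$ coincide or not. In each subcase the authors iteratively compare the minimum or maximum of the multisets $i^+(u)$ and $i^+(v)$, delete matched elements, and repeat until the palindrome structure of $\Delta_{i+1}$ and the reversal $\Delta_i(s)=\widetilde{\Delta_i(t)}$ are forced. This is elementary but somewhat laborious.

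Your approach bypasses all of that. After the harmless translation that aligns the two ladders, you encode the multiset equality $\{|p-a_l|\}=\{|q-a_l|\}$ as an identity of Laurent polynomials via the observation $y^{|x|}+y^{-|x|}=y^x+y^{-x}$. Cancelling the nonzero factor $y^q-y^p$ in the integral domain of Laurent polynomials yields $A(y)=y^{p+q}A(y^{-1})$ in one line, which is exactly the set symmetry $\{a_l\}=\{p+q-a_l\}$. Both conclusions of the lemma then drop out immediately from the order-reversing reflection $\rho(x)=p+q-x$. The generating function device replaces the paper's iterative min/max peeling by a single algebraic cancellation, and it uniformly handles the extremal and internal insertion slots without ever distinguishing them. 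The only price is that one must be comfortable working in the Laurent polynomial ring; the paper's proof, while longer, stays entirely within elementary multiset bookkeeping.
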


\begin{proof}
Suppose ${\Delta}_{i+1}(s) = {\Delta}_{i+1}(t) = (d_1, d_2, \ldots, d_{n-i-1})$ and $\Delta_i(s) \neq \Delta_i(t)$. Consider first the case where $i$ is placed, without loss of generality, to the left of all its greater letters in $u$ and therefore $\Delta_i(s) = (d_0, d_1, d_2, \ldots, d_{n-i-1})$. Then $\max (i^+(u)) = d_0 + d_1 + \cdots + d_{n-i-1}$. Since $i^+(u) = i^+(v)$, the same maximum is obtained only if $\Delta_i(t) =  (d_1, d_2, \ldots, d_{n-i-1}, d_0)$. It is immediate to see that $i^+(u) = \{ d_0 < d_0 + d_1 < d_0+d_1+d_2< \cdots < d_0 + d_1 + \cdots + d_{n-i-1} \}$ and $i^+(u) = \{ d_0 < d_0 + d_{n-i-1} < d_0 + d_{n-i-1} + d_{n-i-2} < \cdots < d_0 + d_{n-i-1} + \cdots + d_{1} \}$. Since $u\sim_+ v$, it follows that $\Delta_{i}(s) = \widetilde{\Delta_{i}(t)}$ and furthermore $\Delta_{i+1}(s)$ is a palindrome.

Now suppose that the letter $i$ appears in between larger letters at both $u$ and $v$. Then we have
\[ {\Delta}_{i}(s) = (d_1, d_2, \ldots, d_{k-1}, d'_k, d''_k, d_{k+1}, \ldots, d_{n-i-1}) \neq \] \[ {\Delta}_{i}(t) = (d_1, d_2, \ldots, d_{l-1}, e'_l, e''_l, d_{l+1}, \ldots, d_{n-i-1}), \] for suitable indices $k,l$. 
%We claim that $d_1=d_2=\cdots = d_{n-i-1}=d$ for a suitable $d\in \mathbb{P}$. 
Set $i^+(u) = i^+(v) = M$. We distinguish between the following cases:

{\bf Case 1.} $k=l$: Then we claim that $n-i-1$ is odd, $k=(n-i)/2$, and $(d_1, d_2, \ldots, d_{n-i-1})$ is a palindrome. Since $i^+(u) = i^+(v) = M$, considering minimum elements, we obtain $\min\{d'_k, d''_k\} = \min\{e'_k, e_k''\}$. Clearly $d'_k \neq e'_k$, since ${\Delta}_{i}(s) \neq {\Delta}_{i}(t)$. Since $d'_k + d''_k = e'_k + e''_k = d_k$, it follows that $d'_k = e''_k$ and $d''_k = e'_k$. Going one step further for the multisets $M \setminus \{d'_k, d''_k\} = M \setminus \{e'_k, e''_k\}$, we obtain $\min\{d'_k+d_{k-1}, d''_k+d_{k+1}\} = \min\{e'_k+d_{k-1}, e_k''+d_{k+1}\}$, thus $d_{k-1} = d_{k+1}$. Repeating this process, we have $d_{k-j} = d_{k+j}$ for $j$ up to $r=\min\{k-1, n-i-k\}$. Suppose that $k-1\neq n-i-k$; without loss of generality $k-1<n-i-k$. Then $r=k-1$, $\max i^+(u) = d'_k + d_{k+1} + \cdots + d_{n-i-1}$, 
and $\max i^+(v) = d''_k + d_{k+1} + \cdots + d_{n-i-1}$, which leads to a contradiction.

{\bf Case 2.} $k<l$: Considering the maximum element of $i^+(u)$ and $i^+(v)$, we obtain that 
$\max\{d_k'+d_{k-1}+\cdots+d_1, d_k'' + d_{k+1} + \cdots + d_{n-i-1}\} = \max\{e_l'+d_{l-1}+\cdots+d_1, e_l'' + d_{l+1} + \cdots + d_{n-i-1}\}$. It follows that $d_k'+d_{k-1}+\cdots+d_1 = e_l'' + d_{l+1} + \cdots + d_{n-i-1}$ and $e_l'+d_{l-1}+\cdots+d_1 = d_k'' + d_{k+1} + \cdots + d_{n-i-1}$. Deleting these two elements from $i^+(u)$ and $i^+(v)$, respectively, we consider the two new possible choices for maximum and we get 
$d_k'+d_{k-1}+\cdots+d_2 = e_l'' + d_{l+1} + \cdots + d_{n-i-2}$ and $e_l'+d_{l-1}+\cdots+d_2 = d_k'' + d_{k+1} + \cdots + d_{n-i-2}$. Thus, $d_1 = d_{n-i-1}$. 
Repeating this process, we have $d_j = d_{n-i-j}$ for $j\in [r]$, where $r=\min\{k, n-i-1-l\}$. We claim that $k=n-i-1-l$. Suppose for the sake of contradiction, without loss of generality, that $k< n-i-1-l$. After $k$ successive deletions of the distances from the leftmost and rightmost elements, we obtain a common multiset $M_k$. If we compute the maximum of $M_k$ with respect to $u$ we get $\max M_k = d''_k + d_{k+1} + d_{k+2} + \cdots + d_{n-i-1-k}$, whereas doing the same with respect to $v$ yields $\max M_k < d_{k+1} + d_{k+2} + \cdots + d_{n-i-1-k}$, which clearly cannot hold. 

Since $k=n-i-1-l$, on the one hand with respect to $u$, we have $M_k = \{d''_k < d''_k + d_{k+1}< \cdots < d''_k + d_{k+1} + \cdots + d_{l-1} \}$ and on the other with respect to $v$, we obtain $M_k = \{e'_l < e'_l + d_{l-1}< \cdots < e'_l + d_{l-1} + \cdots + d_{k+1} \}$. Then it is easy to conclude that $d''_k = e'_l, d_{k+1} = d_{l-1}, d_{k+2} = d_{l-2},$ and so on.
\end{proof}

\vspace{0.5cm}

\begin{proof}[Proof of theorem \ref{seqofdiff}] \ \\
\emph{The condition ${\Delta}_{i}(s) = {\Delta}_{i}(t)$, for each $i \in [2, n-1]$ is sufficient}: Suppose that ${\Delta}_{i}(s) = {\Delta}_{i}(t)$, for each $i \in [2, n-1]$. We will show that $u {\sim}_{ss} v$ using Proposition \ref{intersectionrule}. Using previous notation and the Inclusion-Exclusion Principle we get

\[ | \overline{s_{i}} \cap ( \bigcup_{j=i+1}^{n} \overline{s_{j}} ) | = | \overline{s_{i}} \cap ( \bigcup_{j=i+1}^{n} \overline{s_{j}^{(i+1)}} ) | =
| \bigcup_{j = 1}^{n-i} ( \overline{s_{i}} \cap \overline{s_{i+j}^{(i+1)}} ) | \]

%\[ = \sum_{k=1}^{n-i} {(-1)}^{k+1} ( \sum_{1 \leq j_{1} < \cdots < j_{k} \leq n-i} | \overline{s_{i}} \cap \overline{s_{i + j_{1}}^{(i+1)}}  \cap \cdots %\cap \overline{s_{i+j_{k}}^{(i+1)}} | ) \]
\begin{equation}\label{IEP1}
 = \sum_{k=1}^{n-i-1} {(-1)}^{k+1} \!\!\!\!\!\!\!\!\!\!\!\!\!\sum_{1 \leq j_{1} < \cdots < j_{k} \leq n-i} | \overline{s_{i}} \cap \overline{s_{i + j_{1}}^{(i+1)}}  \cap \cdots \cap \overline{s_{i+j_{k}}^{(i+1)}} |  + {(-1)}^{n-i+1} | \overline{s_{i}} \cap \overline{s_{i+1}^{(i+1)}} \cap \cdots \cap \overline{s_{n}^{(i+1)}} |. \end{equation}
Similarly we obtain
\[ | \overline{t_{i}} \cap ( \bigcup_{j=i+1}^{n} \overline{t_{j}} ) | = \]
\begin{equation}\label{IEP2} 
= \sum_{k=1}^{n-i-1} {(-1)}^{k+1} \!\!\!\!\!\!\!\!\!\!\!\!\!\sum_{1 \leq j_{1} < \cdots < j_{k} \leq n-i} | \overline{t_{i}} \cap \overline{t_{i + j_{1}}^{(i+1)}}  \cap \cdots \cap \overline{t_{i+j_{k}}^{(i+1)}} |  + {(-1)}^{n-i+1} | \overline{t_{i}} \cap \overline{t_{i+1}^{(i+1)}} \cap \cdots \cap \overline{t_{n}^{(i+1)}} |. \end{equation}

In view of Lemma \ref{comparison}, we distinguish between $3$ cases:

{\bf $(1)$} {\em $s_{i} < s_{i+1}^{(i+1)}$ and $t_{i} < t_{i+1}^{(i+1)}$}. \\
Since ${\Delta}_{i}(s) = {\Delta}_{i}(t)$, Lemma \ref{differences} immediately yields the equality
\[ | \overline{s_{i}} \cap \overline{s_{i+1}^{(i+1)}} \cap \cdots \cap \overline{s_{n}^{(i+1)}} | =
   | \overline{t_{i}} \cap \overline{t_{i+1}^{(i+1)}} \cap \cdots \cap \overline{t_{n}^{(i+1)}} | \]
between the last terms in (\ref{IEP1}) and (\ref{IEP2}).

Furthermore, ${\Delta}_{i}(s) = {\Delta}_{i}(t)$ implies the following equality of coarser differences
\[ (s_{i+j_{1}}^{(i+1)} - s_{i}, \ldots , s_{i+j_{k}}^{(i+1)} - s_{i+j_{k-1}}^{(i+1)}) =
   (t_{i+j_{1}}^{(i+1)} - t_{i}, \ldots , t_{i+j_{k}}^{(i+1)} - t_{i+j_{k-1}}^{(i+1)}). \]
Then Lemma \ref{differences} once more implies that
\[ | \overline{s_{i}} \cap \overline{s_{i + j_{1}}^{(i+1)}}  \cap \cdots \cap \overline{s_{i+j_{k}}^{(i+1)}} | =
   | \overline{t_{i}} \cap \overline{t_{i + j_{1}}^{(i+1)}}  \cap \cdots \cap \overline{t_{i+j_{k}}^{(i+1)}} |, \]
so that every term for $s$ in (\ref{IEP1}) is equal to the corresponding one for $t$ in (\ref{IEP2}).

The dual case $s_{i} > s_{n}^{(i+1)}$ and $t_{i} > t_{n}^{(i+1)}$ is dealt in a similar way.

\vspace{0.3cm}

{\bf $(2)$} {\em $s_{i+l}^{(i+1)} < s_{i} < s_{i+l+1}^{(i+1)}$, for some $l \in [n-1+i]$}. \\
By Lemma \ref{comparison} we immediately get $t_{i+l}^{(i+1)} < t_{i} < t_{i+l+1}^{(i+1)}$, for the same index $l$.
Rearranging terms we obtain
\[ | \overline{s_{i}} \cap \overline{s_{i+1}^{(i+1)}} \cap \cdots \cap \overline{s_{n}^{(i+1)}} | =
   | \overline{s_{i+1}^{(i+1)}} \cap \ldots \cap \overline{s_{i+l}^{(i+1)}} \cap \overline{s_{i}} \cap \overline{s_{i+l+1}^{(i+1)}}  \cap \cdots \cap \overline{s_{n}^{(i+1)}} |. \]
Since ${\Delta}_{i}(s) = {\Delta}_{i}(t)$, Lemma \ref{differences} implies that the latter term is equal to
\[| \overline{t_{i+1}^{(i+1)}} \cap \ldots \cap \overline{t_{i+l}^{(i+1)}} \cap \overline{t_{i}} \cap \overline{t_{i+l+1}^{(i+1)}} \cap \cdots \cap
   \overline{t_{n}^{(i+1)}} |,\]  which is clearly identical to
$| \overline{t_{i}} \cap \overline{t_{i+1}^{(i+1)}} \cap \cdots \cap \overline{t_{n}^{(i+1)}} |$.

A similar rearrangement of terms would lead us to compare the cardinalities

\[
| \overline{s_{j_i+1}^{(i+1)}} \cap \ldots \cap \overline{s_{i+j_m}^{(i+1)}} \cap \overline{s_{i}} \cap \overline{s_{i+j_{m+1}}^{(i+1)}}  \cap \cdots \cap \overline{s_{i+j_k}^{(i+1)}} |
\]
and
\[
| \overline{t_{j_i+1}^{(i+1)}} \cap \ldots \cap \overline{t_{i+j_m}^{(i+1)}} \cap \overline{t_{i}} \cap \overline{t_{i+j_{m+1}}^{(i+1)}}  \cap \cdots \cap \overline{t_{i+j_k}^{(i+1)}} |,
\]
for a suitable index $m$. Once more, $\Delta_i(s) = \Delta_i(t)$ implies the following equality of coarser differences

\[ (s_{i+j_2}^{(i+1)} - s_{i+j_1}^{(i+1)}, \ldots, s_i - s_{i+j_m}^{(i+1)}, s_{i+j_{m+1}}^{(i+1)} - s_{i}, \ldots , s_{i+j_{k}}^{(i+1)} - s_{i+j_{k-1}}^{(i+1)}) = \] \[
   (t_{i+j_2}^{(i+1)} - t_{i+j_1}^{(i+1)}, \ldots, t_i - t_{i+j_m}^{(i+1)}, t_{i+j_{m+1}}^{(i+1)} - t_{i}, \ldots , t_{i+j_{k}}^{(i+1)} - t_{i+j_{k-1}}^{(i+1)}). \]
Now the result follows immediately by Lemma \ref{differences}.

\vspace{0.3cm}

{\bf $(3)$} {\em $s_{i} < s_{i+1}^{(i+1)}$ and $t_{i} > t_{n}^{(i+1)}$}. \\
By a direct application of Lemma \ref{comparison} (Case 2(b)), we obtain that the consecutive differences in both $| \overline{s_{i}} \cap \overline{s_{i+1}^{(i+1)}} \cap \cdots \cap \overline{s_{n}^{(i+1)}} |$ and  $| \overline{t_{i}} \cap \overline{t_{i+1}^{(i+1)}} \cap \cdots \cap \overline{t_{n}^{(i+1)}} |$ is $(d,d,\ldots,d)$. Therefore, by Lemma \ref{differences}, we obtain that 
$| \overline{s_{i}} \cap \overline{s_{i+1}^{(i+1)}} \cap \cdots \cap \overline{s_{n}^{(i+1)}} | = | \overline{t_{i}} \cap \overline{t_{i+1}^{(i+1)}} \cap \cdots \cap \overline{t_{n}^{(i+1)}} |$.

For the previous terms of the summations in Equations (\ref{IEP1}) and (\ref{IEP2}), it suffices to construct a bijection from the set $\{1\leq j_1<\ldots < j_k\leq n-i\}$ to itself, that will preserve the equality of the corresponding sums there. This is equivalent to constructing a bijection $\phi$ from the set $\{(j_0, j_1, \ldots, j_k) : j_0=0  < j_{1} < \cdots < j_{k} \leq n-i\}$ to $\{(j_1,\ldots,j_k,j_{k+1} ) : 1\leq  j_{1} < \cdots < j_{k} < j_{k+1} = n+1-i\}$, that will preserve the equality
\begin{equation}\label{eq:translation}
 | \overline{s_{i}} \cap \overline{s_{i + j_{1}}^{(i+1)}}  \cap \cdots \cap \overline{s_{i+j_{k}}^{(i+1)}} | 
= 
|  \overline{t_{i + \phi_1(\alpha)}^{(i+1)}}  \cap \cdots \cap \overline{t_{i+\phi_k(\alpha)}^{(i+1)}} \cap \overline{t^{(i+1)}_{n+1}}|,
\end{equation}
where by convention $\overline{t_i}:= \overline{t^{(i+1)}_{n+1}}$, $\alpha=(0,j_1, \ldots, j_k)$, $n_i=n+1-i$ and the bijection $\phi$ is defined via its coordinate functions as
\[ \phi(\alpha) = (\phi_1(\alpha), \phi_2(\alpha), \ldots, \phi_k(\alpha), \phi_{k+1}(\alpha))
=(n_i-j_k, n_i-j_k + j_1, \ldots, n_i-j_k + j_{k-1}, n_i). \]

By Lemma \ref{comparison} (Case 2(b)), we have that $s^{(i+1)}_{i+j_{l}} - s^{(i+1)}_{i+j_{l-1}} = d(j_{l}-j_{l-1})$ and $t^{(i+1)}_{i+\phi_{l+1}(\alpha)} - t^{(i+1)}_{i+\phi_l(\alpha)} = d(\phi_{l+1}(\alpha)-\phi_l(\alpha))$. Now, by a careful analysis of the definition of $\phi$, it follows that in every case $\phi_{l+1}(\alpha)-\phi_l(\alpha) = j_{l} - j_{l-1}$, for $l=1,\ldots, k$. The equality (\ref{eq:translation}) now follows by Lemma \ref{differences}.

The dual case $s_{i} > s_{n}^{(i+1)}$ and $t_{i} < t_{i+1}^{(i+1)}$ is dealt in a similar way.

\vspace{0.5cm}

\emph{The condition ${\Delta}_{i}(s) = {\Delta}_{i}(t)$, for each $i \in [2, n-1]$ is necessary}: Suppose that $u\sim_{ss} v$. We will show that ${\Delta}_{i}(s) = {\Delta}_{i}(t)$, for each $i\in [2,n-1]$. Suppose the contrary. Let $i$ be the largest index in $[2,n-1]$ such that ${\Delta}_{i}(s) \neq {\Delta}_{i}(t)$. If $i=n-1$, then by Lemma \ref{differences} it follows that there exists an embedding $E$ such that $|\overline{s_n^{(n-1)}}\cap \overline{s_{n-1}^{(n-1)}}|\neq |\overline{t_n^{(n-1)}}\cap \overline{t_{n-1}^{(n-1)}}|$. Therefore, by Proposition \ref{intersectionrule}, $u\nsim_{ss} v$, a contradiction. Thus we may assume that $i<n-1$. 

By Lemma \ref{palindrome}, we know that $\Delta_{i+1} = \Delta_{i+1}(s) = \Delta_{i+1}(t) = (d_1, d_2, \ldots, d_{n-i-2}, d_{n-i-1})$ is a palindrome. Thus, $d_k = d_{n-i-k}$ for all $1\leq k \leq \lceil \frac{n-i-1}{2}\rceil$. Therefore, the factors of the words $u$ and $v$ that correspond to the previous distance vector $\Delta_{i+1}$ may be written in the form
\begin{equation}\label{configuration} 
* \ \underbrace{\circ \cdots \circ}_{d_1-1} \ * \ \underbrace{\circ \cdots \circ}_{d_2-1} \ * \ \cdots \ * \ \underbrace{\circ \cdots \circ}_{d_2-1} \ * \ \underbrace{\circ \cdots \circ}_{d_1-1} *,
\end{equation}
where $*$ corresponds to letters greater than $i$ and $\circ$ corresponds to letters less than or equal to $i$. The crucial point is the placement of the letter $i$ on $u$ and $v$. 

First, we consider the case where $i$ is placed in between greater letters. It will replace one of the characters $\circ$ in the above configuration, in distinct positions for $u$ and $v$ respectively, since $\Delta_{i}(s) \neq \Delta_{i}(t)$. Let $r$ be the distance of the letter $i$ from the leftmost (respectively, rightmost) greater letter. Since $\Delta_{i}(s) = \widetilde{\Delta_{i}(t)}$, after the insertion of the letter $i$, without loss of generality, we have the following configurations of common length $m$
\[ * \ \underbrace{\cdots}_r \ \fbox{i} \ \cdots \ \circ \ \underbrace{\cdots}_r \ * \quad \textrm{ and } \quad 
* \ \underbrace{\cdots}_r \ \circ \ \cdots \ \fbox{i} \ \underbrace{\cdots}_r \ *, 
\]
for the corresponding factors of $u$ and $v$, respectively. We want to count the number of times that the letter $i$ is inherited in some minimal cluster for $u$ and $v$, hence the factors in $u$ and $v$ on the left and on the right of the above configurations contain only letters smaller than $i$ and they do not affect us.

These may be written in a more precise form as follows:
\[ 
* \ u_1 \ \fbox{i} \ v \ \circ \ u_2 \ * \quad \textrm{ and } \quad * \ u'_2 \ \circ \ v' \ \fbox{i} \ u'_1 \ * ,
\]
where $|u_1| = |u_2| = |u_1'| = |u_2'| = r$ and $|v| = |v'|$.

We distinguish between two cases, $r < |v|$ and $r\geq |v|$. In the former case, $v$ and $v'$ can be respectively written as $v = u_3 b w$ and $v' = w' b' u'_3$, where $|u_3|=|u_3'|=r$; $b, b'$ are letters, and $w, w'\in \mathbb{P}^*$. Consider the embedding $E=\{1, r+2, m\}$. We have the following parts of the pre-clusters for $u$ and $v$, respectively

\[ \begin{array}{ccccccccccccccccc}
       * & u_1 & \fbox{${\bf{i}}$} & u_3 & b & w   & \circ & u_2 & * & & & &  & & & & \\
         &  & * & u_1 & \fbox{${\bf{i}}$}    & \cdots & \cdots & \cdots & \circ & u_2 & * & & & & & & \\
      & &  &  &  &  &  & & * & u_1 & \fbox{${\bf{i}}$} & u_3 & b & w   & \circ & u_2 & *  \\
\end{array} \]

\vspace{1cm}

\[ \begin{array}{ccccccccccccccccc}
       * & u'_2 & \circ & w' & b' & u_3'   & \fbox{${\bf{i}}$} & u'_1 & * & & & &  & & & & \\
         &  & * & \cdots & \cdots    & \cdots & b' & u_3' & \fbox{${\bf{i}}$} & u'_1 & * & & & & & & \\
      & &  &  &  &  &  & & * & u'_2 & \circ & w' & b' & u_3'   & \fbox{${\bf{i}}$} & u'_1 & *  \\
\end{array} \]
We claim that $b>i$ if and only if $b'>i$. In the notation of the proof of Lemma \ref{palindrome}, we observe that $b > i$ if and only if $r = d''_k + d_{k+1} + \cdots + d_{k+q}$, for a suitable $q\geq 0$. Since $d''_k = e'_l, d_{k+1} = d_{l-1}, d_{k+2} = d_{l-2},$ etc., we have that $r = e'_l + d_{l-1} + \cdots + d_{l-q}$, and $b' > i$. The converse also holds following a similar argument.
%$\Delta_{i}(s) = \widetilde{\Delta_{i}(t)}$, 
In view of this observation, canceling out the common behavior of $i$ with respect to $b$ and $b'$, the letter $i$ appears one extra time in the minimal cluster of $v$. Since $u\sim_{ss} v$, this is a contradiction. 

Now suppose that $r\geq |v|$. Consider again the embedding $E=\{1, r+2, m\}$. Let $b$ denote the letter that appears right above the letter $i$ of the middle word in the pre-cluster of $u$ and let $b'$ denote the letter that appears right below the letter $i$ of the first word in the pre-cluster of $v$. The claim $b> i \Leftrightarrow b'>i$ follows by symmetry, as in the previous case. Using similar arguments, the letter $i$ appears one extra time in the pre-cluster of $v$, a contradiction.

Let us now suppose that no letter greater than $i$ precedes $i$ to the left or right. Without loss of generality, we have the following configurations
\[ \fbox{i} \ \underbrace{\cdots}_{d_0-1} \ * \ \underbrace{\cdots}_{d_1-1} \ * \ \cdots \ * \ \underbrace{\cdots}_{d_{n-i-1}-1} \ * \quad \textrm{ and } \quad 
* \ \underbrace{\cdots}_{d_1-1} \ * \ \cdots \ * \ \underbrace{\cdots}_{d_{n-i-1}-1} \ * \ \underbrace{\cdots}_{d_0-1} \ \fbox{i}, 
\]
for the corresponding factors of $u$ and $v$, respectively. Since $\Delta_i(s) \neq \Delta_i(t)$, we let $k$ be the smallest index such that $d_0 + d_1 + \cdots + d_{k-1} \neq d_1 + d_2 + \cdots + d_{k-1} + d_k$. It follows that $d_0 = d_1 = \cdots = d_{k-1} =d$, for a suitable positive integer $d$. Then we consider the embedding $E=\{1, 1+kd, 1+kd+\min\{d,d_{k}\} \}$.

Our configurations can be written in the form 
\[ \fbox{i} \ u_1 \ * \ u_2 \ * \ \cdots \ * \ u_{k} \ * \ v \ * \  w   \quad \textrm{ and } \quad w' \ * v' * \ u'_k \ * \ u'_{k-1} \ * \ \cdots \ * \ u'_1 \ \fbox{i}, \]
where $|u_j| = |u'_j| = d$ for $j=1,\ldots, k$, $|v| = |v'|=d_{k}$, and $|w| = |w'|$. Suppose that $d_k < d$. Then, $u_1'$ can be written as $u_1' = w_1' b' v_1'$, where $|v_1'| = d_k$, $b'\in \mathbb{P}$, $b'<i$ and $w_1'\in \mathbb{P}^*$. Therefore, we have the following parts of the pre-clusters for $u$ and $v$, respectively

\[ \begin{array}{cccccccccccccccccc}
       \fbox{i} & u_1 & *   & \cdots & * & u_k & * & v & * & w & & & & & & & &   \\
         & & & & & & \fbox{i} & \cdots  & \cdots   & \cdots & \cdots & \cdots & \cdots & \cdots & \cdots & \cdots & &  \\
         & & & & & & & & \fbox{i}  & u_1 &  *   & \cdots & * & u_k & * & v & * & w 
\end{array} \]

\vspace{0.5cm}

\[ \begin{array}{cccccccccccccccccccc}
       % w' & * & v' 
       \! \! \! \cdots &\! \! \! * & \! \! \! u_k'   &\! \! * & \cdots & \! \! * & w_1' & b' & v_1' & \fbox{i} & & & & & & & & & &  \\
         & & & & & &        \cdots & \cdots & \cdots &    * & u_k' & * & \cdots & * & w_1' & b' & v_1' & \fbox{i} & &  \\
         & & & & & & & &        \cdots & \cdots & \cdots & \cdots & \cdots   & \cdots & \cdots & * & w_1' & b' & v_1' & \fbox{i}. \\
\end{array} \]
Clearly, the letter $i$ is inherited once in the former minimal cluster, whereas it is inherited twice in the latter one.

The case where $d<d_k$ is dealt in a similar way.

\end{proof}

\begin{example}\label{ex:sswilfclass}
Let $n = 8$ and let $u = 21365874$, $v = 21657843$ and $w = 21478563$. Then set $s = u^{-1} = 21385476$, $t = v^{-1} = 21874356$ and
$p = w^{-1} = 21836745$.

For $i = 7$ down to $2$ the proper suffixes of $s$ are $76$, $476$, $5476$, $85476$, $385476$ and $1385476$. The alphabet sets of these factors are ${\Sigma}_{7}(s) = \{ 6, 7 \}$, ${\Sigma}_{6}(s) = \{4, 6, 7 \}$, ${\Sigma}_{5}(s) = \{4, 5, 6, 7 \}$,
${\Sigma}_{4}(s) = \{4, 5, 6, 7, 8 \}$, ${\Sigma}_{3}(s) = \{3, 4, 5, 6, 7, 8 \}$ and ${\Sigma}_{2}(s) = \{1, 3, 4, 5, 6, 7, 8 \}$.
The corresponding difference vectors are ${\Delta}_{7}(s) = (1)$, ${\Delta}_{6}(s) = (2, 1)$, ${\Delta}_{5}(s) = (1, 1, 1)$,
${\Delta}_{4}(s) = (1, 1, 1, 1)$, ${\Delta}_{3}(s) = (1, 1, 1, 1, 1)$ and ${\Delta}_{2}(s) = (2, 1, 1, 1, 1, 1)$.

The proper suffixes of $t$ are $56$, $356$, $4356$, $74356$, $874356$ and $1874356$. Their alphabet sets are
${\Sigma}_{7}(t) = \{ 5, 6 \}$, ${\Sigma}_{6}(t) = \{3, 5, 6 \}$, ${\Sigma}_{5}(t) = \{3, 4, 5, 6 \}$, ${\Sigma}_{4}(t) = \{3, 4, 5, 6, 7 \}$,
${\Sigma}_{3}(t) = \{3, 4, 5, 6, 7, 8 \}$ and ${\Sigma}_{2}(t) = \{1, 3, 4, 5, 6, 7, 8 \}$.
It is straightforward to check that the difference vectors of $t$ are identical to the corresponding ones for $s$, and consequently we obtain
that $u {\sim}_{ss} v$. 

On the other hand, the proper suffix $745$ of $p$ has alphabet set equal to ${\Sigma}_{6}(p) = \{ 4, 5, 7 \}$ and the corresponding vector of differences is ${\Delta}_{6}(p) = (1, 2) \neq (2, 1) = {\Delta}_{6}(s)$. Therefore $w \nsim_{ss} u$.

Let us now calculate the class $[u]_{ss}$. All possible permutations that satisfy the sequence of differences that correspond to $s=u^{-1}$ are the following: $21385476, 21385467,$  $21835467, 21835476, 21346578, 21346587, 21874356, 21874365$. Taking the inverse of each such permutation, we obtain $[u]_{ss}$ as the class \[ \{21365874, 21365784, 21465783, 21465873, 21346578, 21346587, 21657843, 21658743 \}. \]
Observe that $\big|[u]_+\big|=8 = 2^3$. This is not a coincidence. In the next section, using a binary tree representation for $[u]_+$ we will prove that the cardinality of each super-strong Wilf equivalence class is a power of $2$.

\end{example}

We conclude this section with an application of Theorem \ref{seqofdiff} that demonstrates its feasibility and gives an immediate characterization of the words $w$ for which $w\sim_{ss} \widetilde{w}$. 

Two important super-strong Wilf equivalence classes are the classes \[ \mathcal{I}_n = [123\ldots n]_{ss} \qquad \textrm{and} \qquad \mathcal{M}_n = [12\ldots (n-3) (n-1) (n-2) n]_{ss},\]
for $n\geq 1$ and $n\geq 3$, respectively. It is easy to check that for $u\in \mathcal{I}_n$, $\Delta_i(u^{-1})$ is the $(n-i)$-tuple with all entries equal to $1$, for $i=1,\ldots,n-1$, whereas for $u\in \mathcal{M}_n$, $\Delta_{n-1}(u^{-1}) = (2)$ and $\Delta_i(u^{-1})$ is the $(n-i)$-tuple with all entries equal to $1$, for $i=1,\ldots,n-2$. Observe that in both cases, the vectors of consecutive differences are always palindromic.

\begin{theorem}
Let $w\in\mathcal{S}_n$. Then $w\sim_{ss} \widetilde{w}$ if and only if either $w\in \mathcal{I}_n$ or $w\in\mathcal{M}_n$.
\end{theorem}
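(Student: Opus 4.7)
The plan is to convert the statement, via Theorem~\ref{seqofdiff}, into a question about palindromic difference vectors, and then analyze which chains of position sets can yield palindromic differences at every level. First I would observe that if $s = w^{-1}$, then a direct computation gives $(\widetilde{w})^{-1}_k = n + 1 - s_k$. Hence $\Sigma_i(\widetilde{w}^{-1})$ is the reflection of $\Sigma_i(s)$ through $(n+1)/2$, which reverses the sorted order, and consequently $\Delta_i(\widetilde{w}^{-1}) = \widetilde{\Delta_i(s)}$. By Theorem~\ref{seqofdiff}, $w \sim_{ss} \widetilde{w}$ is therefore equivalent to requiring that $\Delta_i(s)$ be a palindrome for every $i \in [2, n-1]$.

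Sufficiency is then immediate from the descriptions of $\mathcal{I}_n$ and $\mathcal{M}_n$ already recorded: for $u \in \mathcal{I}_n$ each $\Delta_i(u^{-1}) = (1, 1, \ldots, 1)$, and for $u \in \mathcal{M}_n$ the only nontrivial $\Delta_i(u^{-1})$ is the length-one palindrome $(2)$. For the necessary direction, set $P_i = \{s_i, s_{i+1}, \ldots, s_n\} \subseteq [n]$, the set of positions in $w$ of the letters $\geq i$. Then $|P_i| = n - i + 1$, the vector $\Delta_i(s)$ equals the consecutive-differences vector of the sorted $P_i$, and the chain $P_1 = [n] \supset P_2 \supset \cdots \supset P_n$ is built by removing one point (the position of letter $i$) at each stage.

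The structural heart of the proof, which I expect to be the main obstacle, is the following dichotomy. If $P_i$ is an interval $[a, b]$, palindromicity of $\Delta_{i+1}$ forces the removed point to be either an endpoint of $[a, b]$ (in which case $P_{i+1}$ remains an interval with $\Delta_{i+1}$ all ones) or the exact midpoint $(a+b)/2$, which requires $|P_i|$ odd and produces a centered-gap interval $P_{i+1} = [a, m-1] \cup [m+1, b]$ with $\Delta_{i+1} = (1^{k-1}, 2, 1^{k-1})$ where $k = m-a$. The further claim needed is that, once a centered-gap interval of size $\geq 4$ arises, no subsequent single-point deletion preserves palindromicity: a short case analysis shows that an endpoint deletion shifts the unique $2$ off-center, while an interior deletion either creates a second $2$ at an asymmetric position or replaces the $2$ by a $3$ at a non-central slot. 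Hence a midpoint deletion is admissible only at step $i = n-2$, where $|P_{n-2}| = 3$ and the resulting $P_{n-1} = \{a, a+2\}$ has trivially palindromic $\Delta_{n-1} = (2)$.

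Assembling these facts leaves only two possibilities. Either every deletion is at an endpoint, in which case all $P_i$ are intervals, every $\Delta_i(s)$ equals $(1, 1, \ldots, 1)$, and $w \in \mathcal{I}_n$; or exactly one midpoint deletion is performed, necessarily at step $i = n-2$, giving $\Delta_i(s) = (1, \ldots, 1)$ for $i \in [2, n-2]$ and $\Delta_{n-1}(s) = (2)$, so that $w \in \mathcal{M}_n$. The only configuration to check separately is when $P_2$ itself is already a centered-gap interval (possible only for $n$ odd with $\mathrm{pos}(1) = (n+1)/2$); the same lemma then forces $|P_2| \leq 3$, hence $n = 3$, and in that case $w$ lies in $\mathcal{M}_3$, in agreement with the claim.
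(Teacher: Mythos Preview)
Your proposal is correct and follows essentially the same route as the paper: both reduce $w\sim_{ss}\widetilde{w}$ via Theorem~\ref{seqofdiff} to the requirement that every $\Delta_i(w^{-1})$ be a palindrome, verify sufficiency from the known difference sequences of $\mathcal{I}_n$ and $\mathcal{M}_n$, and then argue that once an all-ones vector acquires a central $2$ with nonzero flanks $(1^r,2,1^r)$, $r>0$, no further deletion can keep the next vector palindromic. Your phrasing in terms of the descending chain of position sets $P_i$ is a mild repackaging of the paper's ``largest $i$ with $\Delta_i=(1,\dots,1)$'' argument, but the substance and the case analysis are the same.
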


\begin{proof}
By Theorem \ref{seqofdiff} we have that $w\sim_{ss} \widetilde{w}$ if and only if $\Delta_i(w^{-1}) = \Delta_i(\widetilde{w}^{-1})$, for $i=1,\ldots, n-1$. Viewing vectors as words, it is easy to check that $\Delta_i(\widetilde{w}^{-1}) = \widetilde{\Delta_i(w^{-1})}$, hence $w\sim_{ss} \widetilde{w}$ if and only if $\Delta_i(w^{-1})$ is a palindrome.

The above remark immediately implies that $\widetilde{\mathcal{I}_n} = \mathcal{I}_n$ and $\widetilde{\mathcal{M}_n} = \mathcal{M}_n$. For the converse, let $i$ be the largest index such that $\Delta_i(w^{-1}) = (1, \ldots, 1)$. If $i = n-1$, then $w\in \mathcal{I}_n$. On the other hand, if $i<n-1$, then we necessarily get \[\Delta_{i+1}(w^{-1}) = (\underbrace{1, 1, \ldots, 1}_r,2,\underbrace{1, 1, \ldots, 1}_r).\] If $r=0$, then $i=n-2$ and clearly $w\in\mathcal{M}_n$. If $r>0$, then all possible choices for $\Delta_{i+2}(w^{-1})$ correspond to non-palindromic vectors.
\end{proof}

\noindent \emph{Remark}. The only words $w$ that do not begin or end in $1$ and for which we have $w\sim_{ss} \widetilde{w}$ are the words $213$ and $312$ which constitute the class $\mathcal{M}_3$.

\section{Binary Tree Representation}\label{Sec:BinaryTree}

The binary tree representation that will be presented here corresponds to the reconstruction of a word $u$ and its cross equivalent words, using the sets $i^+(u)$, for $i = 1, \ldots, n-1$. For this representation we need to define the following sets of partly-filled words of length $n$, on the alphabet $A = \{ 1, 2, \ldots, n, * \}$, where $*$ is an extra character. For $i \in [0,n]$ we set 
\[ S^n_i = \{ x \in A^n \ : \ |x|_j = 1 \text{ for } 1\leq j \leq i \ \text{ and } |x|_* = n-i\}. \]
Observe that for $i=0$ we have $S^n_0 = \{*^n\}$ and for $i=n$ we obtain $S_n^n = \mathcal{S}_n$.

Fix a word $u = u_1 u_2 \ldots u_j \ldots u_n \in \mathcal{S}_n$. We denote by $T^n(u)$ the ordered rooted tree whose leaves constitute the cross equivalence class of $u$. This tree is defined in the following way:
\begin{itemize}
\item The root of the tree is $*^n \in S^n_0$.
\item The elements at the $i$-th level constitute the set \[L_i^n(u) = \{ x\in S_i^n \ : \ d_x(i,*) = i^+(u)\}.\]
\item The word $y=y_1 y_2 \ldots y_n \in S_{i+1}^n$ is a child of the word $x = x_1 x_2 \ldots x_n \in S_i^n$ if and only if for all $j\in [1,i]$ there exists an index $k$ such that $x_k = y_k = j$. In other words, the letters $1, 2, \ldots, i$ appear in the same positions in both $x$ and $y$.
\item The order for the children of the same vertex is defined as follows. If $y = y_1 y_2\ldots y_n$ and $y'=y_1' y_2' \ldots y_n'$ are two children of $x$, then $y$ is \emph{to the left} of $y'$ when for indices $k$ and $l$ such that $y_k = y'_{l} = i+1$, we have $k< l$, otherwise $y$ is \emph{to the right} of $y'$.
\end{itemize}
Note that $L_i^n(u) \neq \varnothing$ for $i\in [0,n]$, since it contains a word $u^{(i)} = u_{i1} u_{i2} \ldots u_{in}$ such that $u_{ij} = u_j$ if $u_j \leq i$ and $u_{ij} = *$ if $u_j> i$. Obviously, for this word the condition $d_u(i,*) = i^+(u)$ holds. Observe that in this notation we have $u^{(0)} = *^n$, $u^{(n)} = u$ and $L_n^n(u) = [u]_+$.

\begin{proposition}\label{prop:tree1or2chil}
The tree $T^n(u)$ is a binary tree, where at each level the number of children is the same throughout all nodes and is either equal to $1$ or $2$.
\end{proposition}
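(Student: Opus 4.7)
The plan is to prove, by simultaneous induction on the level $i\in[0,n-1]$, two statements: (P1) the star positions of every node $x$ at level $i$ of $T^n(u)$ have sequence of consecutive differences equal to $\Delta_{i+1}(s)$ or to its reverse $\widetilde{\Delta_{i+1}(s)}$, where $s=u^{-1}$; and (P2) the number of children of $x$ is either $1$ or $2$, and is the same across all level-$i$ nodes. The base case $i=0$ will be immediate, since the root $*^n$ has star positions $1,\dots,n$ with consecutive differences $(1,\dots,1)=\Delta_1(s)$.

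For the inductive step, I will fix a node $x$ at level $i$ with sorted star positions $p_1<\dots<p_m$ (where $m=n-i$) and consecutive differences $(f_1,\dots,f_{m-1})$, and let $k_0$ denote the rank of $s_{i+1}$ in $\Sigma_{i+1}(s)$. A child of $x$ is obtained by placing $i+1$ at some star $p_k$, and is valid precisely when $\{|p_k-p_j|:j\neq k\}=(i+1)^+(u)$. The placement $k=k_0$ (when $(f_l)=\Delta_{i+1}(s)$) or $k=m+1-k_0$ (when $(f_l)=\widetilde{\Delta_{i+1}(s)}$) will always be valid, directly from the definition of $(i+1)^+(u)$. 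To rule out any additional placement I will apply a maximum-distance argument: the maximum of the multiset $\{|p_k-p_j|:j\neq k\}$ equals $\max(p_k-p_1,\,p_m-p_k)$, and matching it with the maximum of $(i+1)^+(u)$ forces either $p_k=p_{k_0}$ or $p_k+p_{k_0}=p_1+p_m$. Propagating this matching requirement to the remaining entries of the multiset shows that the second alternative holds if and only if $(f_l)$ is a palindrome, in which case the only extra valid position is precisely $p_{m+1-k_0}$. Since the number of valid placements depends only on the pair $((f_l),\,(i+1)^+(u))$, which by (P1) coincides up to reversal across all level-$i$ nodes, (P2) follows with branching number in $\{1,2\}$.

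For the inductive step of (P1), after a valid placement at $p_k$ the new star positions have consecutive differences obtained by merging the entries $f_{k-1}$ and $f_k$ into $f_{k-1}+f_k$. Writing this out for $k\in\{k_0,\,m+1-k_0\}$, and using the palindromicity of $(f_l)$ in the second case, shows that the resulting vector equals $\Delta_{i+2}(s)$ or $\widetilde{\Delta_{i+2}(s)}$, which closes the induction. The most delicate step will be the uniqueness-up-to-reflection claim for valid placements; this amounts to a combinatorial fact about line arrangements, namely that two distinct points in a sorted configuration cannot share the same multiset of distances to the remaining points unless the whole configuration is palindromic about its midpoint and the two points are reflections of each other.
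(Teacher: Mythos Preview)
Your proposal is correct and follows essentially the same inductive approach as the paper: both track the pattern of star positions at each level (you via the difference vector $\Delta_{i+1}(s)$ or its reverse, the paper via the configuration word $c(x)$ and whether it is a palindrome) and use a max-distance argument to show that valid placements of $i+1$ are unique up to a reflection that is available precisely in the palindromic case. Your formulation has the minor advantage of making the link to $\Delta_{i+1}(s)$ explicit from the outset, whereas the paper establishes that connection only later, in the proof of the subsequent theorem.
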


\begin{proof}
Suppose $x\in L_i^n$, where for brevity $L_i^n = L_i^n(u)$. Let $f(x)$ be the factor of $x$ whose first and last letter is respectively the first and last $*$ that appear in $x$. Let us replace each $j\in alph(f(x))$, where $1\leq j \leq i$, with the character $\circ$. In this way, we obtain a configuration word $c(x)$ on the two-lettered alphabet $\{*, \circ\}$ of length $|f(x)|$. Note that this configuration also appears in \eqref{configuration}. Our induction hypothesis is that at each level $i$, one of the following holds:

\begin{enumerate}
\item $|\{ c(x) \ | \ x\in L_i^n\}| = 2$ and for any fixed $x\in L_i^n$ it holds that $L_i^n = \{ c(x), \widetilde{c(x)}\}$. In this case, we have exactly one child for each parent $x\in L_i^n$.
\item $|\{ c(x) \ | \ x\in L_i^n\}| = 1$ and for all $x\in L_i^n$ it holds that $c(x) = \widetilde{c(x)}$.
\begin{enumerate}
\item If $ |c(x)|$ is odd, with the character in the middle position equal to $*$ and $\frac{|c(x)|-1}{2} \in (i+1)^+(u)$, we have exactly one child for each parent $x\in L_i^n$.
\item In all other cases, we have exactly two children for each parent $x\in L_i^n$.
\end{enumerate}
\end{enumerate}

For the first step of this procedure, there are three different cases according to the set $1^+(u)$.
\begin{itemize}
\item {\bf Case 1.} $1^+(u) = \{1, 2, \ldots, n-1\}$ \\
In this case, letter $1$ is placed either in position $1$ or in position $n$. Then, we immediately get $L_1^n = \{1\ *^{n-1}, *^{n-1}\ 1\}$ and $c(x) = *^{n-1}$ for both $x\in L_1^n$. 
\item {\bf Case 2.} $1^+(u) = \{ 1, 1, 2, 2, \ldots, \frac{n-1}{2}, \frac{n-1}{2}\}$ (This case holds only for $n$ odd.) \\
The letter $1$ is placed in the middle position $(n+1)/2$. Here we have only one choice for inserting the letter $1$, namely $L_1^n = \{*^{(n-1)/2}\ 1\ *^{(n-1)/2}\}$. In this case, $c(x) = *^{(n-1)/2} \ \circ \ *^{(n-1)/2}$.
\item {\bf Case 3.} $1^+(u) = \{1, 1, 2, 2, \ldots, l, l, l+1, l+2, \ldots, k\}$, where $1\leq l < k$ and $k+l = n-1$. The letter $1$ is neither in positions $1$ or $n$, nor in the middle position. Here there are two choices for each position, namely $l+1$ or $k+1$. In this case, $L_1^n = \{*^{l}\ 1\ *^{k}, *^{k}\ 1\ *^{l}\}$. Thus, $c(*^{l}\ 1\ *^{k}) = *^{l}\ \circ \ *^{k} = \widetilde{*^{k}\ \circ \ *^{l}} = \widetilde{c(*^{k}\ 1\ *^{l})}$.
\end{itemize}
In all three cases, our desired results hold after inserting $1$. 

Suppose that the induction hypothesis holds for the level $i$. Define $k = \max ((i+1)^+(u))$. The letter $i+1$ will be inserted either in position $k+1$ or in position $|f(x)|-k$ of the word $f(x)$.

In Case 1, we cannot have both choices for placing the letter $1$, because this would imply symmetry, i.e. $\widetilde{c(x)} = c(x)$, a contradiction. Consider $x, x' \in L_i^n$ such that $c(x') \neq c_i(x)$ but $c(x')= \widetilde{c(x)}$. Let $y, y'$ denote their children, respectively. If $i+1$ is inserted in position $k+1$ of $f(x)$, then  it will necessarily be symmetrically inserted in position $|f(x)|-k$ of $f(x')$ and this yields $c(y') = \widetilde{c(y)}$. 

In Case 2 (a), for every word $x \in L_i^n$, its corresponding configuration $c(x)$ will be written as $c(x) = z * z$, for a suitable word $z$. Note that in this case, $\displaystyle k = \frac{|c(x)| + 1}{2}$. Clearly, for the unique child $y$ of $x$, its corresponding word $c(y)$ will be written as $c(y) = z \circ z$.

In Case 2 (b), we have two children for every parent $x$, namely $y, y'$. Suppose, without loss of generality, that $y$ is created by inserting $i+1$ in position $k+1$ of $f(x)$. Then, by symmetry, $y'$ is created by inserting $i+1$ in position $|f(x)|-k$ of $f(x)$. Clearly, we would have that $c(y') = \widetilde{c(y)}$. 

\end{proof}

\begin{corollary}\label{cor:CardCrossEquiv}
The number of permutations in a cross equivalence class is a power of 2.
\end{corollary}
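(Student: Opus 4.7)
My plan is to deduce the corollary directly from Proposition \ref{prop:tree1or2chil} by a short induction on the level $i \in [0,n]$ of the binary tree $T^n(u)$. Since the cross equivalence class $[u]_+$ coincides, by construction, with $L_n^n(u)$, the set of leaves of $T^n(u)$ at level $n$, it suffices to show that $|L_i^n(u)|$ is a power of $2$ for every $i \in [0,n]$.

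The base case at $i=0$ is trivial: $L_0^n(u) = \{*^n\}$, so $|L_0^n(u)| = 1 = 2^0$. For the inductive step, I would assume that $|L_{i-1}^n(u)| = 2^k$ for some non-negative integer $k$. By Proposition \ref{prop:tree1or2chil}, every node at level $i-1$ has the same number of children, which is either $1$ or $2$. Hence $|L_i^n(u)|$ is either $2^k$ (in the first case) or $2^{k+1}$ (in the second case); in either case it is a power of $2$. Specialising to $i = n$ yields $|[u]_+| = |L_n^n(u)| = 2^m$, where $m$ is the number of levels $i \in [1,n]$ at which every node has exactly two children.

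No serious obstacle arises here: all the combinatorial work has already been done in Proposition \ref{prop:tree1or2chil}, and the corollary is essentially a one-line consequence of the uniformity of the branching behavior at each level.
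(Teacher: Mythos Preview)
Your proof is correct and follows essentially the same approach as the paper: the paper's proof is the single line ``The result follows from the equality $[u]_+ = L_n^n(u)$,'' leaving implicit the level-by-level doubling argument from Proposition~\ref{prop:tree1or2chil} that you spell out in detail.
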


\begin{proof}
The result follows from the equality $[u]_+ = L_n^n(u)$.
\end{proof}

The question now is how cross equivalence classes are partitioned into super-strong Wilf equivalence classes. In order to deal with this, we define a labeling on the vertices of $T^n(u)$ that have two children, distinguishing between ``good'' ones, which preserve symmetry (labeled $0$), and ``bad'' ones which destroy symmetry (labeled $1$).

\begin{definition}\label{def:labeling}
A vertex $x \in T^n(u)$ that has two children $y$ and $y'$ is labeled $0$ if $c(y)=c(y')$, and $1$ otherwise.
\end{definition}

It follows from the proof of Proposition \ref{prop:tree1or2chil} that vertices with the same level have the same labeling.

\begin{theorem}
Let $u, v \in \mathcal{S}_n$. Suppose that $u\sim_+ v$. Then $u\sim_{ss} v$ if and only if one can get from $u$ to $v$ in the cross equivalence tree $T_n(u)$ by following a path that avoids switching direction (from left to right or vice-versa) on vertices at the same level which are labeled 1.
\end{theorem}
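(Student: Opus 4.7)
The plan is to convert super-strong Wilf equivalence into a level-by-level equality of configurations along the tree, then read that equality as a condition on the directional choices along the path from $u$ to $v$.

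First, I would show that $u\sim_{ss} v$ is equivalent to the family of equalities $c(u^{(j)}) = c(v^{(j)})$ for all $j\in[1,n-2]$. Indeed, for such a $j$, the positions of $*$'s in $u^{(j)}$ are exactly the positions of letters greater than $j$ in $u$, so the spanning factor $c(u^{(j)})$ (written in the two-letter alphabet $\{*,\circ\}$) records precisely the consecutive gaps between those positions; equivalently, $c(u^{(j)})$ determines and is determined by $\Delta_{j+1}(u^{-1})$. Thus $c(u^{(j)}) = c(v^{(j)})$ iff $\Delta_{j+1}(u^{-1})=\Delta_{j+1}(v^{-1})$, and Theorem \ref{seqofdiff} identifies the conjunction over $j\in[1,n-2]$ with $u\sim_{ss}v$.

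Next, I would induct on $j$. Since $u\sim_+ v$, both $u$ and $v$ are leaves of the same tree $T^n(u)$, and by the level-uniformity of labels noted after Definition \ref{def:labeling}, the bifurcation type at level $j-1$ is intrinsic to that level. Suppose inductively that $c(u^{(j-1)})=c(v^{(j-1)})$. The insertion of the letter $j$ depends only on the parent configuration and on $j^+(u)$, so the children of $u^{(j-1)}$ and of $v^{(j-1)}$ realize the same pair of child configurations. If level $j-1$ is labeled $0$ (or has a unique child), then both children share the same configuration, so $c(u^{(j)})=c(v^{(j)})$ holds independently of the choices made; if level $j-1$ is labeled $1$, the proof of Proposition \ref{prop:tree1or2chil} shows that the two children are symmetric insertions in positions $k+1$ and $|f(x)|-k$ of the spanning factor, giving two distinct configurations (reverses of one another). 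Hence $c(u^{(j)})=c(v^{(j)})$ precisely when $u^{(j)}$ and $v^{(j)}$ are the ``same-side'' children of their respective parents (both the left insertion, or both the right insertion).

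Finally, I would translate this into the path language. The $u$-to-$v$ path in $T^n(u)$ ascends through $u^{(n-1)},\ldots,u^{(L+1)}$ to $\mathrm{LCA}(u,v)$ at level $L$, then descends through $v^{(L+1)},\ldots,v^{(n-1)}$ to $v$. For every label-$1$ level $j-1\in[L,n-2]$, the path visits $u^{(j-1)}$ on the ascent and $v^{(j-1)}$ on the descent (equal only when $j-1=L$), and the edges it traverses from these vertices to the children $u^{(j)}$ and $v^{(j)}$ each have a well-defined side (left or right) under the symmetric insertion. The inductive condition ``$u^{(j)}$ and $v^{(j)}$ are same-side children'' is exactly the condition that the path does not switch between left and right at the label-$1$ vertices occupying level $j-1$. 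Combining the two steps yields the theorem.

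The main obstacle, as I see it, is making the ``same-side'' identification canonical across the two distinct parents $u^{(j-1)}$ and $v^{(j-1)}$. This is resolved by the observation that at a label-$1$ level the bifurcation is a reflection about the center of the spanning factor $f(x)$, so left/right inherit a parent-independent meaning once $c(u^{(j-1)})=c(v^{(j-1)})$ is maintained inductively. After that, the proof is a routine induction on the level, driven entirely by the two previous results.
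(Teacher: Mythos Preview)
Your proposal is correct and follows essentially the same route as the paper: both arguments reduce $u\sim_{ss}v$ to the level-by-level equality $c_i(u)=c_i(v)$ via the observation that the configuration $c(u^{(j)})$ encodes exactly $\Delta_{j+1}(u^{-1})$, and then invoke Theorem~\ref{seqofdiff}. The paper is terser on the final step (it simply appeals to Definition~\ref{def:labeling} and says the result follows), whereas you spell out the induction and the translation to the path language; this extra detail is sound and perhaps clearer, but it is the same proof.
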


\begin{proof}
Consider the unique path $u^{(0)} = *^n \rightarrow u^{(1)} \rightarrow \cdots \rightarrow u^{(i)} \rightarrow \cdots \rightarrow u^{(n)}= u$ from the root of $T^n(u)$ to the leaf $u$. Let $f_i(u)$ and $c_i(u)$ be respectively the factor $f(u^{(i)})$ of $u^{(i)}$ and its configuration $c(u^{(i)})$. 

Suppose $c_i(u) = c^{(i)}_1 c^{(i)}_2 \ldots c^{(i)}_{|f_i(u)|}$. Define $\Sigma_i(u) = \{j : c^{(i)}_j = * \}$ and observe that if we arrange it in ascending order we obtain \[ \Sigma_i(u) = \{j_1 < j_2 < \cdots < j_{n-i} \}. \]
Recall that the sets $\Sigma_i(u^{-1}) = \Sigma_i(s)$ of Definition \ref{def:SeqDiff} represent the positions of the $n-i$ letters in $u$ that are greater than $i$. It is crucial to observe that they also represent the positions of $*$ in $u^{(i)}$. Since $u^{(i)}$ can be written in the form $u^{(i)} = p f_i(u) q$ for suitable words $p,q \in [1,i-1]^*$, this observation yields \begin{equation}\label{eq:rescaling} \Sigma_{i+1}(s) = |p| + \Sigma_i(u).\end{equation} 
This change of index is due to the following fact. In both cases, we consider distances between letters which are greater than $i$. These correspond precisely to the sets $\Sigma_{i+1}(s)$ and $\Sigma_i(u)$ that appear to the left and right hand side of (\ref{eq:rescaling}). It follows that 
\begin{equation}\label{eq:diffc(u)} s^{(i+1)}_{i+l} - s^{(i+1)}_{i+l-1} = j_{l+1} - j_l, \quad l\in [1,n-i-1]. \end{equation}

Let $v\in [u]_{ss}$. By Theorem \ref{seqofdiff}, this is equivalent to $\Delta_{i+1}(u^{-1}) = \Delta_{i+1}(v^{-1})$ for $i\in [1,n-2]$. In view of equation (\ref{eq:diffc(u)}), this is equivalent to $\Sigma_{i}(u) = \Sigma_{i}(v)$ for $i\in [1,n-2]$ or, in other words, $c_i(u) = c_i(v)$. Going back to Definition \ref{def:labeling}, which provides a labeling on $T^n(u)$, the result follows.
\end{proof}

\begin{corollary}
Let $u\in \mathcal{S}_n$ and let $k, l$ be the number of levels in $T^n(u)$ labeled 0 and 1, respectively. Then:
\begin{itemize}
\item The number of words in each super-strong Wilf equivalence class in $T^n(u)$ is equal to $2^k$.
\item The class $[u]_+$ is partioned into $2^l$ distinct super-strong Wilf equivalence classes.
\end{itemize}
\end{corollary}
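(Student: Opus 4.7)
The plan is to derive both statements as immediate counting consequences of the preceding theorem together with Proposition \ref{prop:tree1or2chil} and Corollary \ref{cor:CardCrossEquiv}. By Proposition \ref{prop:tree1or2chil}, every vertex of $T^n(u)$ has either one or two children, and the branching is uniform at each level; moreover, only vertices with two children carry a label, so $k+l$ equals the total number of branching levels. Consequently, the number of leaves of $T^n(u)$ is exactly $2^{k+l}$, and by Corollary \ref{cor:CardCrossEquiv} this number equals $|[u]_+|$.

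For the first bullet, I would fix $u$ and consider the unique root-to-leaf path corresponding to $u$. By the preceding theorem, a leaf $v\in [u]_+$ satisfies $v\sim_{ss} u$ if and only if the root-to-leaf path of $v$ does not switch direction relative to the path of $u$ at any vertex labeled $1$; equivalently, at each of the $l$ branching levels labeled $1$ the direction is forced to agree with that of $u$, while at each of the $k$ branching levels labeled $0$ the direction is free. This yields exactly $2^k$ choices, so $\bigl|[u]_{ss}\bigr|=2^k$.

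For the second bullet, the partition of $[u]_+$ into super-strong Wilf equivalence classes is a partition into blocks that all have the same size, since the first-bullet argument applies starting from any leaf of $T^n(u)$ (nothing in it depends on the particular choice of $u$). Therefore the number of super-strong Wilf equivalence classes in $[u]_+$ equals
\[ \frac{|[u]_+|}{2^k} \;=\; \frac{2^{k+l}}{2^k} \;=\; 2^l. \]

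The argument is essentially bookkeeping, so I do not expect a substantive obstacle here; the real content has already been packaged into the preceding theorem and into the uniform-branching property of Proposition \ref{prop:tree1or2chil}. The only point requiring a moment's care is verifying that the classes are genuinely equinumerous (so that simple division gives the count in the second bullet), which is immediate from the symmetry of the characterization at level-$1$ vertices.
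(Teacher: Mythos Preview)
Your proposal is correct and follows essentially the same approach as the paper: count $2^k$ leaves reachable from $u$ by allowing free choices at the $k$ levels labeled $0$ and forced choices at the $l$ levels labeled $1$, then divide the total $2^{k+l}$ leaves by $2^k$ to obtain $2^l$ classes. The paper's proof is a two-sentence version of exactly this; your extra remark that the first bullet applies uniformly to any leaf (ensuring equinumerous classes before dividing) is a welcome clarification but not a different argument.
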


\begin{proof}
In order to find the words $v$ that are super-strong Wilf equivalent to $u$, we follow a path in $T_n(u)$ that can change direction (from left to right or vice-versa) only on vertices labeled $0$ at the same level. This provides us with two choices for every such level. This implies the first statement. Now, since $T^n(u)$ has $2^{k+l}$ leaves, the second statement follows.
\end{proof}

\begin{example} \label{ex:tree}Let us construct the tree $T^n(u)$ for $n=8$ and the word $u= 2 1 3 6 5 8 7 4$. First, we find the multisets of distances for the word $u$. These are \\

\begin{tabular}{l l l }
$7^+(u) = \{1\}$, & $6^+(u) = \{2,3\}$, & $5^+(u) = \{1,1,2\}$, \\ $4^+(u) = \{1,2,3,4\}$, &
$3^+(u) = \{ 1, 2, 3, 4, 5 \}$, & \\ $2^+(u) = \{ 2, 3, 4, 5, 6, 7\}$, and & $1^+(u) = \{1, 1, 2, 3, 4, 5, 6 \}$.
\end{tabular} \\

Using the above, we find all words that have the same multisets of distances by placing the corresponding letter at each step and considering all possible choices at each level. This yields the tree $T^n(u)$ shown in page \pageref{fig:tree}. 

The following table traces the path along the vertices of the tree $T^n(u)$ beginning at the root and leading to the leaf $u$. The corresponding configuration words $c_i(u)$ and vectors of differences $\Delta_{i+1}(u^{-1})$ for $u^{-1}$ are also given at each step, for $i\in[0,8]$.

\begin{center}
\begin{tabular}{l | c c c c c c c c | c | c}
$i$ & & & & $u^{(i)}$& & & & & $c_i(u)$ & $\Delta_{i+1}(u^{-1})$ \\
\hline
0 & $*$ & $*$ & $*$ & $*$ & $*$ & $*$ & $*$ & $*$ & $ * \ * \ * \ * \ * \ * \ * \ *$     & $(1,1,1,1,1,1,1)$ \\
1 & $*$ & $1$ & $*$ & $*$ & $*$ & $*$ & $*$ & $*$ & $ * \ \circ \ * \ * \ * \ * \ * \ *$ & $(2,1,1,1,1,1)$ \\
2 & $2$ & $1$ & $*$ & $*$ & $*$ & $*$ & $*$ & $*$ & $ * \ * \ * \ * \ * \ *$             & $(1,1,1,1,1)$ \\
3 & $2$ & $1$ & $3$ & $*$ & $*$ & $*$ & $*$ & $*$ & $ * \ * \ * \ * \ * $                & $(1,1,1,1)$ \\
4 & $2$ & $1$ & $3$ & $*$ & $*$ & $*$ & $*$ & $4$ & $ * \ * \ * \ *$                     & $(1,1,1)$\\
5 & $2$ & $1$ & $3$ & $*$ & $5$ & $*$ & $*$ & $4$ & $ * \ \circ \ * \ *$                 & $(2,1)$\\
6 & $2$ & $1$ & $3$ & $6$ & $5$ & $*$ & $*$ & $4$ & $ * \ * $                            & $(1)$\\
7 & $2$ & $1$ & $3$ & $6$ & $5$ & $*$ & $7$ & $4$ & $ * $                                & $-$\\
8 & $2$ & $1$ & $3$ & $6$ & $5$ & $8$ & $7$ & $4$ &      $-$                             & $-$\\
\end{tabular}
\end{center}

Let $v = 21347856$. The super-strong Wilf equivalence classes obtained by the tree $T^n(u)$, starting from the class $[u]_{ss}$ and reading the leaves of the tree from left to right, where $u$, $v$ and their reversals are underlined within their classes, are the following:

\vspace{-0.5cm}
\begin{center}
\begin{tabular}{| l | l |}
\hline
 & Class \\
\hline
$u$ & $21346578, 21346587, 21365784, \underline{21365874}, 21465783, 21465873, 21657843, 21658743$ \\
\hline
$v$ & $\underline{21347856}, 21348756, 21378564, 21387564, 21478563, 21487563, 21785643, 21875643$ \\
\hline
$\tilde{u}$ & $34785612, 34875612, 37856412, 38756412, \underline{47856312}, 48756312, 78564312, 87564312$ \\
\hline
$\tilde{v}$ & $34657812, 34658712, 36578412, 36587412, 46578312, 46587312, 65784312, \underline{65874312}$ \\
\hline
\end{tabular}
\end{center}

The above classes are distinguished in the tree (see page \pageref{fig:tree}) as follows: elements of $[u]_{ss}$ in red boxes, elements of $[v]_{ss}$ in blue boxes and their reversals in the corresponding dashed boxes. Finally, $0$ and $1$ labels are shown with green and orange color, respectively.

\end{example}

\section{Conclusion}

Recently the geometric notion of \emph{shift equivalence} was defined and studied in \cite{FGMPX}. In the same paper it was shown that shift equivalence implies strong Wilf and therefore Wilf equivalence. We would like to know if there are connections amongst Wilf, cross, shift, and super-strong Wilf equivalence classes. Suppose that $u\nsim_{ss} \widetilde{u}$. By Lemma \ref{lem:Kitaev} we get $[u]_{ss} \cup [\widetilde{u}]_{ss}\subseteq [u]$ and since $u\sim_+ \widetilde{u}$ we also obtain $[u]_{ss} \cup [\widetilde{u}]_{ss}\subseteq [u]_+$. Is there a specific relationship between $[u]_+$ and $[u]$? For $n\leq 5$, we have verified that $[u]_{ss} \cup [\widetilde{u}]_{ss} = [u] = [u]_+$. On the other hand, in Example \ref{ex:tree}, we found a word $u$ such that $[u]_{ss} \cup [\widetilde{u}]_{ss}\neq [u]_+$. This led us to the question whether $[u]_{ss} \cup [\widetilde{u}]_{ss} = [u]$. It turns out that the answer is negative. Let $u=2 3 4 1 5 6$ and $v = 2 5 6 1 4 3$. Then, as a by-product from \cite[Section 5]{FGMPX}, $u\sim_s v$, and therefore $u\sim v$. On the other hand $v\nsim_{ss} u$ and $v\nsim_{ss} \widetilde{u}$. 

\begin{problem}
Is it true that $[u]\subseteq [u]_+$ or do there exist words $v\in [u]_+$ such that $v\nsim u$?
\end{problem}

\begin{problem}
Enumerate all cross equivalence and super-strong Wilf equivalence classes for a given $n\in \mathbb{N}$.  
\end{problem}

\newpage 
\voffset=-2.5cm
\hoffset=-2.5cm
\thispagestyle{empty}

\begin{turn}{90}
\begin{tikzpicture}[scale=1.64]
\label{fig:tree}
{\tiny
%\draw[help lines] (-8,-2) grid (8,9);

\node at (0,9.5) {* \, * \, * \, * \, * \, * \, * \, *};

\node at (0,9) {\color{orange}{$1$}};
\draw [orange] (0,9) circle [radius=2pt];

\draw[->](0,9.2) -- (-4,7.7);
\node at (-4,7.4) {* \, 1 \, * \, * \, * \, * \, * \, *};
\draw[->](0,9.2) -- (4,7.7);
\node at (4,7.4) {* \, * \, * \, * \, * \, * \, 1 \, *};

\draw[->](-4,7.2) -- (-4,6.2);
\node at (-4,6) {2 \, 1 \, * \, * \, * \, * \, * \, *};
\node at (-4,5.5) {\color{green}{$0$}};
\draw [green] (-4,5.5) circle [radius=2pt];
\draw[->](4,7.2) -- (4,6.2);
\node at (4,6) {* \, * \, * \, * \, * \, * \,  1 \, 2};
\node at (4,5.5) {\color{green}{$0$}};
\draw [green] (4,5.5) circle [radius=2pt];

\draw[->](-4,5.7) -- (-6,4.3);
\node at (-6,4) {2 \, 1 \, 3 \, * \, * \, * \, * \, *};
\node at (-6,3.5) { \color{green}{$0$}};
\draw [green] (-6,3.5) circle [radius=2pt];
\draw[->](-4,5.7) -- (-2,4.3);
\node at (-2,4) {2 \, 1 \, * \, * \, * \, * \, * \, 3};
\node at (-2,3.5) {\color{green}{$0$}};
\draw [green] (-2,3.5) circle [radius=2pt];
\draw[->](4,5.7) -- (6,4.3);
\node at (6,4) {* \, * \, * \, * \, * \, 3 \, 1 \, 2};
\node at (6,3.5) {\color{green}{$0$}};
\draw [green] (6,3.5) circle [radius=2pt];
\draw[->](4,5.7) -- (2,4.3);
\node at (2,4) {3 \, * \, * \, * \, * \, * \, 1 \, 2};
\node at (2,3.5) {\color{green}{$0$}};
\draw [green] (2,3.5) circle [radius=2pt];

\draw[->](-6,3.7) -- (-7,2.3);
\node at (-7,2) {2 \, 1 \, 3 \, 4 \, * \, * \, * \, *};
\node at (-7,1.5) {\color{orange}{$1$}};
\draw [orange] (-7,1.5) circle [radius=2pt];
\draw[->](-6,3.7) -- (-5,2.3);
\node at (-5,2) {2 \, 1 \, 3 \, * \, * \, * \, * \, 4};
\node at (-5,1.5) {\color{orange}{$1$}};
\draw [orange] (-5,1.5) circle [radius=2pt];
\draw[->](-2,3.7) -- (-3,2.3);
\node at (-3,2) {2 \, 1 \, 4 \, * \, * \, * \, * \, 3};
\node at (-3,1.5) {\color{orange}{$1$}};
\draw [orange] (-3,1.5) circle [radius=2pt];
\draw[->](-2,3.7) -- (-1,2.3);
\node at (-1,1.5) {\color{orange}{$1$}};
\draw [orange] (-1,1.5) circle [radius=2pt];
\node at (-1,2) {2 \, 1 \, * \, * \, * \, * \, 4 \, 3};
\draw[->](6,3.7) -- (7,2.3);
\node at (1,1.5) {\color{orange}{$1$}};
\draw [orange] (1,1.5) circle [radius=2pt];
\node at (1,2) {3 \, 4 \, * \, * \, * \, * \, 1 \, 2};
\draw[->](6,3.7) -- (5,2.3);
\node at (3,1.5) {\color{orange}{$1$}};
\draw [orange] (3,1.5) circle [radius=2pt];
\node at (3,2) {3 \, * \, * \, * \, * \, 4 \, 1 \, 2};
\draw[->](2,3.7) -- (3,2.3);
\node at (5,1.5) {\color{orange}{$1$}};
\draw [orange] (5,1.5) circle [radius=2pt];
\node at (5,2) {4 \, * \, * \, * \, * \, 3 \, 1 \, 2};
\draw[->](2,3.7) -- (1,2.3);
\node at (7,1.5) {\color{orange}{$1$}};
\draw [orange] (7,1.5) circle [radius=2pt];
\node at (7,2) {* \, * \, * \, * \, 4 \, 3 \, 1 \, 2};

\draw[->](-7,1.7) -- (-7.5,0.8);
\node at (-7.5,0.5) {$2 1 3 4 * 5 * *$};
\draw[->](-7,1.7) -- (-6.5,0.8);
\node at (-6.5,0.5) {$2 1 3 4 * * 5 *$};
\draw[->](-5,1.7) -- (-5.5,0.8);
\node at (-5.5,0.5) {$2 1 3 * 5 * * 4$};
\draw[->](-5,1.7) -- (-4.5,0.8);
\node at (-4.5,0.5) {$2 1 3 * * 5 * 4$};
\draw[->](-3,1.7) -- (-3.5,0.8);
\node at (-3.5,0.5) {$2 1 4 * 5 * * 3$};
\draw[->](-3,1.7) -- (-2.5,0.8);
\node at (-2.5,0.5) {$2 1 4 * * 5 * 3$};
\draw[->](-1,1.7) -- (-1.5,0.8);
\node at (-1.5,0.5) {$2 1 * 5 * * 4 3$};
\draw[->](-1,1.7) -- (-0.5,0.8);
\node at (-0.5,0.5) {$2 1 * * 5 * 4 3$};
\draw[->](1,1.7) -- (0.5,0.8);
\node at (0.5,0.5) {$3 4 * 5 * * 1 2$};
\draw[->](1,1.7) -- (1.5,0.8);
\node at (1.5,0.5) {$3 4 * * 5 * 1 2$};
\draw[->](3,1.7) -- (2.5,0.8);
\node at (2.5,0.5) {$3 * 5 * * 4 1 2$};
\draw[->](3,1.7) -- (3.5,0.8);
\node at (3.5,0.5) {$3 * * 5 * 4 1 2$};
\draw[->](5,1.7) -- (4.5,0.8);
\node at (4.5,0.5) {$4 * 5 * * 3 1 2$};
\draw[->](5,1.7) -- (5.5,0.8);
\node at (5.5,0.5) {$4 * * 5 * 3 1 2$};
\draw[->](7,1.7) -- (6.5,0.8);
\node at (6.5,0.5) {$* 5 * * 4 3 1 2$};
\draw[->](7,1.7) -- (7.5,0.8);
\node at (7.5,0.5) {$* * 5 * 4 3 1 2$};

\draw[->](-7.5,0.3) -- (-7.5,-0.3);
\node at (-7.5,-1.15) {\color{green}{$0$}};
\draw [green] (-7.5,-1.15) circle [radius=2pt];
\node at (-7.5,-0.5) {$2 1 3 4 6 5 * *$};
\draw[->](-6.5,0.3) -- (-6.5,-0.3);
\node at (-6.5,-1.15) {\color{green}{$0$}};
\draw [green] (-6.5,-1.15) circle [radius=2pt];
\node at (-6.5,-0.5) {$2 1 3 4 * * 5 6$};
\draw[->](-5.5,0.3) -- (-5.5,-0.3);
\node at (-5.5,-1.15) {\color{green}{$0$}};
\draw [green] (-5.5,-1.15) circle [radius=2pt];
\node at (-5.5,-0.5) {$2 1 3 6 5 * * 4$};
\draw[->](-4.5,0.3) -- (-4.5,-0.3);
\node at (-4.5,-1.15) {\color{green}{$0$}};
\draw [green] (-4.5,-1.15) circle [radius=2pt];
\node at (-4.5,-0.5) {$2 1 3 * * 5 6 4$};
\draw[->](-3.5,0.3) -- (-3.5,-0.3);
\node at (-3.5,-1.15) {\color{green}{$0$}};
\draw [green] (-3.5,-1.15) circle [radius=2pt];
\node at (-3.5,-0.5) {$2 1 4 6 5 * * 3$};
\draw[->](-2.5,0.3) -- (-2.5,-0.3);
\node at (-2.5,-1.15) {\color{green}{$0$}};
\draw [green] (-2.5,-1.15) circle [radius=2pt];
\node at (-2.5,-0.5) {$2 1 4 * * 5 6 3$};
\draw[->](-1.5,0.3) -- (-1.5,-0.3);
\node at (-1.5,-1.15) {\color{green}{$0$}};
\draw [green] (-1.5,-1.15) circle [radius=2pt];
\node at (-1.5,-0.5) {$2 1 6 5 * * 4 3$};
\draw[->](-0.5,0.3) -- (-0.5,-0.3);
\node at (-0.5,-1.15) {\color{green}{$0$}};
\draw [green] (-0.5,-1.15) circle [radius=2pt];
\node at (-0.5,-0.5) {$2 1 * * 5 6 4 3$};
\draw[->](0.5,0.3) -- (0.5,-0.3);
\node at (0.5,-1.15) {\color{green}{$0$}};
\draw [green] (0.5,-1.15) circle [radius=2pt];
\node at (0.5,-0.5) {$3 4 6 5 * * 1 2$};
\draw[->](1.5,0.3) -- (1.5,-0.3);
\node at (1.5,-1.15) {\color{green}{$0$}};
\draw [green] (1.5,-1.15) circle [radius=2pt];
\node at (1.5,-0.5) {$3 4 * * 5 6 1 2$};
\draw[->](2.5,0.3) -- (2.5,-0.3);
\node at (2.5,-1.15) {\color{green}{$0$}};
\draw [green] (2.5,-1.15) circle [radius=2pt];
\node at (2.5,-0.5) {$3 6 5 * * 4 1 2$};
\draw[->](3.5,0.3) -- (3.5,-0.3);
\node at (3.5,-1.15) {\color{green}{$0$}};
\draw [green] (3.5,-1.15) circle [radius=2pt];
\node at (3.5,-0.5) {$3 * * 5 6 4 1 2$};
\draw[->](4.5,0.3) -- (4.5,-0.3);
\node at (4.5,-1.15) {\color{green}{$0$}};
\draw [green] (4.5,-1.15) circle [radius=2pt];
\node at (4.5,-0.5) {$4 6 5 * * 3 1 2$};
\draw[->](5.5,0.3) -- (5.5,-0.3);
\node at (5.5,-1.15) {\color{green}{$0$}};
\draw [green] (5.5,-1.15) circle [radius=2pt];
\node at (5.5,-0.5) {$4 * * 5 6 3 1 2$};
\draw[->](6.5,0.3) -- (6.5,-0.3);
\node at (6.5,-1.15) {\color{green}{$0$}};
\draw [green] (6.5,-1.15) circle [radius=2pt];
\node at (6.5,-0.5) {$6 5 * * 4 3 1 2$};
\draw[->](7.5,0.3) -- (7.5,-0.3);
\node at (7.5,-1.15) {\color{green}{$0$}};
\draw [green] (7.5,-1.15) circle [radius=2pt];
\node at (7.5,-0.5) {$* * 5 6 4 3 1 2$};

\draw[->](-7.5,-0.8) -- (-7.75,-1.7);
\node at (-7.75,-2) {\cfbox{red}{$21346578$}};
\draw[->](-7.5,-0.8) -- (-7.25,-1.7);
\node at (-7.25,-1.8) {\cfbox{red}{$21346587$}};
\draw[->](-6.5,-0.8) -- (-6.75,-1.7);
\node at (-6.75,-2) {\cfbox{blue}{$21347856$}};
\draw[->](-6.5,-0.8) -- (-6.25,-1.7);
\node at (-6.25,-1.8) {\cfbox{blue}{$21348756$}};
\draw[->](-5.5,-0.8) -- (-5.75,-1.7);
\node at (-5.75,-2) {\cfbox{red}{$2 1 3 6 5 7 8 4$}};
\draw[->](-5.5,-0.8) -- (-5.25,-1.7);
\node at (-5.25,-1.8) {\cfbox{red}{$2 1 3 6 5 8 7 4$}};
\draw[->](-4.5,-0.8) -- (-4.75,-1.7);
\node at (-4.75,-2) {\cfbox{blue}{$2 1 3 7 8 5 6 4$}};
\draw[->](-4.5,-0.8) -- (-4.25,-1.7);
\node at (-4.25,-1.8) {\cfbox{blue}{$2 1 3 8 7 5 6 4$}};
\draw[->](-3.5,-0.8) -- (-3.75,-1.7);
\node at (-3.75,-2) {\cfbox{red}{$2 1 4 6 5 7 8 3$}};
\draw[->](-3.5,-0.8) -- (-3.25,-1.7);
\node at (-3.25,-1.8) {\cfbox{red}{$2 1 4 6 5 8 7 3$}};
\draw[->](-2.5,-0.8) -- (-2.75,-1.7);
\node at (-2.75,-2) {\cfbox{blue}{$2 1 4 7 8 5 6 3$}};
\draw[->](-2.5,-0.8) -- (-2.25,-1.7);
\node at (-2.25,-1.8) {\cfbox{blue}{$2 1 4 8 7 5 6 3$}};
\draw[->](-1.5,-0.8) -- (-1.75,-1.7);
\node at (-1.75,-2) {\cfbox{red}{$2 1 6 5 7 8 4 3$}};
\draw[->](-1.5,-0.8) -- (-1.25,-1.7);
\node at (-1.25,-1.8) {\cfbox{red}{$2 1 6 5 8 7 4 3$}};
\draw[->](-0.5,-0.8) -- (-0.75,-1.7);
\node at (-0.75,-2) {\cfbox{blue}{$2 1 7 8 5 6 4 3$}};
\draw[->](-0.5,-0.8) -- (-0.25,-1.7);
\node at (-0.25,-1.8) {\cfbox{blue}{$2 1 8 7 5 6 4 3$}};
\draw[->](0.5,-0.8) -- (0.25,-1.7);
\draw[dashed, blue] (-0.1,-1.9) -- (-0.1, -2.1) -- (0.6, -2.1) -- (0.6,-1.9) -- (-0.1,-1.9); 
\node at (0.25,-2) {$3 4 6 5 7 8 1 2$};
\draw[->](0.5,-0.8) -- (0.75,-1.7);
\draw[dashed, blue] (0.4,-1.7) -- (0.4, -1.9) -- (1.1, -1.9) -- (1.1,-1.7) -- (0.4,-1.7); 
\node at (0.75,-1.8) {$3 4 6 5 8 7 1 2$};
\draw[->](1.5,-0.8) -- (1.25,-1.7);
\draw[dashed, red] (0.9,-1.9) -- (0.9, -2.1) -- (1.6, -2.1) -- (1.6,-1.9) -- (0.9,-1.9); 
\node at (1.25,-2) {$3 4 7 8 5 6 1 2$};
\draw[->](1.5,-0.8) -- (1.75,-1.7);
\draw[dashed, red] (1.4,-1.7) -- (1.4, -1.9) -- (2.1, -1.9) -- (2.1,-1.7) -- (1.4,-1.7); 
\node at (1.75,-1.8) {$3 4 8 7 5 6 1 2$};
\draw[->](2.5,-0.8) -- (2.25,-1.7);
\draw[dashed, blue] (1.9,-1.9) -- (1.9, -2.1) -- (2.6, -2.1) -- (2.6,-1.9) -- (1.9,-1.9); 
\node at (2.25,-2) {$3 6 5 7 8 4 1 2$};
\draw[->](2.5,-0.8) -- (2.75,-1.7);
\draw[dashed, blue] (2.4,-1.7) -- (2.4, -1.9) -- (3.1, -1.9) -- (3.1,-1.7) -- (2.4,-1.7); 
\node at (2.75,-1.8) {$3 6 5 8 7 4 1 2$};
\draw[->](3.5,-0.8) -- (3.25,-1.7);
\draw[dashed, red] (2.9,-1.9) -- (2.9, -2.1) -- (3.6, -2.1) -- (3.6,-1.9) -- (2.9,-1.9); 
\node at (3.25,-2) {$3 7 8 5 6 4 1 2$};
\draw[->](3.5,-0.8) -- (3.75,-1.7);
\draw[dashed, red] (3.4,-1.7) -- (3.4, -1.9) -- (4.1, -1.9) -- (4.1,-1.7) -- (3.4,-1.7); 
\node at (3.75,-1.8) {$3 8 7 5 6 4 1 2$};
\draw[->](4.5,-0.8) -- (4.25,-1.7);
\draw[dashed, blue] (3.9,-1.9) -- (3.9, -2.1) -- (4.6, -2.1) -- (4.6,-1.9) -- (3.9,-1.9); 
\node at (4.25,-2) {$4 6 5 7 8 3 1 2$};
\draw[->](4.5,-0.8) -- (4.75,-1.7);
\draw[dashed, blue] (4.4,-1.7) -- (4.4, -1.9) -- (5.1, -1.9) -- (5.1,-1.7) -- (4.4,-1.7); 
\node at (4.75,-1.8) {$4 6 5 8 7 3 1 2$};
\draw[->](5.5,-0.8) -- (5.25,-1.7);
\draw[dashed, red] (4.9,-1.9) -- (4.9, -2.1) -- (5.6, -2.1) -- (5.6,-1.9) -- (4.9,-1.9); 
\node at (5.25,-2) {$4 7 8 5 6 3 1 2$};
\draw[->](5.5,-0.8) -- (5.75,-1.7);
\draw[dashed, red] (5.4,-1.7) -- (5.4, -1.9) -- (6.1, -1.9) -- (6.1,-1.7) -- (5.4,-1.7); 
\node at (5.75,-1.8) {$4 8 7 5 6 3 1 2$};
\draw[->](6.5,-0.8) -- (6.25,-1.7);
\draw[dashed, blue] (5.9,-1.9) -- (5.9, -2.1) -- (6.6, -2.1) -- (6.6,-1.9) -- (5.9,-1.9); 
\node at (6.25,-2) {$6 5 7 8 4 3 1 2$};
\draw[->](6.5,-0.8) -- (6.75,-1.7);
\draw[dashed, blue] (6.4,-1.7) -- (6.4, -1.9) -- (7.1, -1.9) -- (7.1,-1.7) -- (6.4,-1.7); 
\node at (6.75,-1.8) {$6 5 8 7 4 3 1 2$};
\draw[->](7.5,-0.8) -- (7.25,-1.7);
\draw[dashed, red] (6.9,-1.9) -- (6.9, -2.1) -- (7.6, -2.1) -- (7.6,-1.9) -- (6.9,-1.9); 
\node at (7.25,-2) {$7 8 5 6 4 3 1 2$};
\draw[->](7.5,-0.8) -- (7.75,-1.7);
\draw[dashed, red] (7.4,-1.7) -- (7.4, -1.9) -- (8.1, -1.9) -- (8.1,-1.7) -- (7.4,-1.7); 
\node at (7.75,-1.8) {$8 7 5 6 4 3 1 2$};

}
\end{tikzpicture}
\end{turn}

\newpage
\voffset=0cm
\hoffset=0cm

\end{document}